\documentclass[a4paper]{article}

\setlength{\textwidth}{6.3in}
\setlength{\textheight}{8.7in}
\setlength{\topmargin}{0pt}
\setlength{\headsep}{0pt}
\setlength{\headheight}{0pt}
\setlength{\oddsidemargin}{0pt}
\setlength{\evensidemargin}{0pt}


\usepackage{amsthm,amsmath,amssymb}

\usepackage{graphicx}

\usepackage[colorlinks=true,citecolor=black,linkcolor=black,urlcolor=blue]{hyperref}


\newcommand{\arxiv}[1]{\href{http://arxiv.org/abs/#1}{\texttt{arXiv:#1}}}

\sloppy

\theoremstyle{plain}
\newtheorem{theorem}{Theorem}
\newtheorem{lemma}[theorem]{Lemma}
\newtheorem{corollary}[theorem]{Corollary}

\theoremstyle{definition}

\newtheorem{conjecture}[theorem]{Conjecture}

\newtheorem{question}[theorem]{Question}

\theoremstyle{remark}


\def\mc{\mathcal}
\def\F{\mc{F}}
\def\I{\mc{I}}

\setcounter{secnumdepth}{2}
\setlength{\parskip}{0.2cm}
\setlength{\parindent}{0pt}

\def\F{\mathcal{F}}
\def\tr{\textrm}

\title{New Tur\'an densities for 3-graphs}

\author{Rahil Baber\\
\small 613 Greenford Road,\\ 
\small London, UK.\\
\small \tt rahilbaber@hotmail.com\\
\and John Talbot\thanks{This author is a Royal Society University Research Fellow}\\
\small Department of Mathematics\\
\small UCL, London, UK.\\
\small \tt j.talbot@ucl.ac.uk}


\date{\today\\
\small Mathematics Subject Classifications: 05D05}

\begin{document}
\maketitle

\begin{abstract}
If $\F$ is a family of graphs then the Tur\'an density of $\F$ is determined by the minimum chromatic number of the members of $\F$. 

The situation for Tur\'an densities of 3-graphs is far more complex and still very unclear. Our aim in this paper is to present new exact Tur\'an densities for individual and finite families of $3$-graphs, in many cases we are also able to give corresponding stability results. As well as providing new examples of individual $3$-graphs with Tur\'an densities equal to $2/9,4/9,5/9$ and $3/4$ we also give examples of irrational Tur\'an densities for finite families of 3-graphs, disproving a conjecture of Chung and Graham. (Pikhurko has independently disproved this conjecture by a very different method.)

A central question in this area, known as \emph{Tur\'an's problem}, is to determine the Tur\'an density of $K_4^{(3)}=\{123, 124, 134, 234\}$. Tur\'an conjectured that this should be $5/9$. Razborov [\emph{On 3-hypergraphs with forbidden 4-vertex configurations} in SIAM J. Disc. Math. {\bf 24}  (2010), 946--963] showed that if we consider the induced Tur\'an problem forbidding $K_4^{(3)}$ and $E_1$, the 3-graph with 4 vertices and a single edge, then the Tur\'an density is indeed $5/9$. We give some new non-induced results of a similar nature, in particular we show that $\pi(K_4^{(3)},H)=5/9$ for a $3$-graph $H$ satisfying $\pi(H)=3/4$. 

We end with a number of open questions focusing mainly on the topic of which values can occur as Tur\'an densities.

Our work is mainly computational, making use of Razborov's flag algebra framework. However all proofs are exact in the sense that they can be verified without the use of any floating point operations. Indeed all verifying computations use only integer operations, working either over $\mathbb{Q}$ or in the case of irrational Tur\'an densities over an appropriate quadratic extension of $\mathbb{Q}$.
\end{abstract}

\section{Introduction}
An \emph{$r$-graph} is a pair
$F=(V(F),E(F))$ where $V(F)$ is a set of \emph{vertices} and $E(F)$
is a family of $r$-subsets of $V(F)$ called \emph{edges}. So a $2$-graph is
a simple graph. For ease of notation we usually identify an
$r$-graph with its edge set. The number of edges in $F$ is denoted by $e(F)$.

Given a family of $r$-graphs $\F$ we say that an $r$-graph $H$ is
\emph{$\F$-free} if $H$ does not contain a subgraph isomorphic to
any member of $\F$. For any integer $n\geq 1$ we define the
\emph{Tur\'an number} of $\F$ to be
\[
\tr{ex}(n,\F)=\max\{e(H):H \tr{ is $\F$-free},\ |V(H)|=n\}.
\]
 Even in the simplest case of 2-graphs this parameter can be very difficult to determine exactly thus we will consider the related asymptotic density.

The \emph{Tur\'an density} of $\F$ is defined to be the following
limit (a simple averaging argument due to Katona, Nemetz and Simonovits \cite{KNS} shows that it always exists)
\[
\pi(\F)=\lim_{n\to\infty} \frac{\tr{ex}(n,\F)}{\binom{n}{r}}.\]

There are two general questions that are of interest to us. 
\begin{question}\label{qu:1} Given a family of $r$-graphs $\F$, what is $\pi(\F)$? 
\end{question}
\begin{question}\label{qu:2} Which values in $[0,1)$ are Tur\'an densities of families of $r$-graphs?
\end{question}
For $r=2$ the Erd\H os--Stone--Simonovits theorem answers both questions completely. 
\begin{theorem}[Erd\H os and Stone \cite{ES1}, Erd\H os and Simonovits \cite{ES2}]\label{es:thm}
Let $\F$ be a family of $2$-graphs. If $t=\min\{\chi(F):F\in\F\}\geq 2$ then \[
\pi(\F)=1-\frac{1}{t-1}.\] 
In particular the set of Tur\'an densities of 2-graphs is $\{0,1/2,2/3,3/4,\ldots\}$.
\end{theorem}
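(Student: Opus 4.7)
The plan is to establish matching lower and upper bounds for $\tr{ex}(n,\F)/\binom{n}{2}$. For the lower bound, I would use the Tur\'an graph construction: let $T_{t-1}(n)$ denote the complete balanced $(t-1)$-partite graph on $n$ vertices. Since $\chi(T_{t-1}(n)) = t-1$ while every $F \in \F$ has $\chi(F) \geq t$, no such $F$ embeds into $T_{t-1}(n)$, so $T_{t-1}(n)$ is $\F$-free. A direct count gives $e(T_{t-1}(n)) = \left(1-\tfrac{1}{t-1}\right)\binom{n}{2} + O(n)$, yielding $\pi(\F) \geq 1 - 1/(t-1)$.

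For the upper bound, pick $F_0 \in \F$ with $\chi(F_0) = t$ and let $s = |V(F_0)|$. A proper $t$-coloring exhibits $F_0$ as a subgraph of the complete equi-$t$-partite graph $K_t(s,\ldots,s)$, so every $\F$-free graph is in particular $K_t(s,\ldots,s)$-free. It therefore suffices to show $\tr{ex}(n, K_t(s,\ldots,s)) \leq \left(1-\tfrac{1}{t-1}\right)\binom{n}{2} + o(n^2)$, which is the content of the Erd\H{o}s--Stone theorem. The standard proof proceeds by induction on $t$, with the Kov\'ari--S\'os--Tur\'an theorem furnishing the base case $t=2$. The inductive step takes any graph whose edge density strictly exceeds $1-1/(t-1)$, uses the induction hypothesis (with a carefully chosen auxiliary parameter $s'\gg s$) to produce many copies of $K_{t-1}(s',\ldots,s')$, and then argues by supersaturation/pigeonhole that some such copy has a large common neighbourhood outside, from which one extracts a further part of size $s$ to build $K_t(s,\ldots,s)$. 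The main obstacle is precisely the quantitative balancing in this embedding step, ensuring $s'$ is large enough for the pigeonhole extraction while $n$ remains large enough for the induction hypothesis to apply; this is delicate but does not affect the leading-order density.

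Finally, for the characterization of achievable values, any nonempty family $\F$ in which every member has at least one edge satisfies $t \geq 2$, and conversely choosing $\F = \{K_t\}$ for $t = 2,3,4,\ldots$ realizes the densities $0, 1/2, 2/3, 3/4, \ldots$. The formula $\pi(\F) = 1 - 1/(t-1)$ guarantees that no other values arise, completing the classification.
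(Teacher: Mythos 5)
The paper states this theorem as a cited classical result (attributed to Erd\H{o}s--Stone and Erd\H{o}s--Simonovits) and does not reprove it, so there is no ``paper's own proof'' to compare against. Your sketch is the standard argument and is correct in outline: the Tur\'an graph $T_{t-1}(n)$ gives the lower bound because chromatic number is monotone under subgraph containment; the upper bound reduces to the Erd\H{o}s--Stone theorem via the observation that a $t$-chromatic $F_0$ embeds into $K_t(s,\ldots,s)$; and the classification of achievable densities follows from the formula together with $\F=\{K_t\}$ realizing each value $1-1/(t-1)$. The only place you remain genuinely schematic is the Erd\H{o}s--Stone embedding step itself --- the induction on $t$ with Kov\'ari--S\'os--Tur\'an as base case and the pigeonhole extraction of a common neighbourhood from a copy of $K_{t-1}(s',\ldots,s')$ --- but that is a well-known, if technically involved, argument and your description of it is accurate. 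One minor point worth keeping in mind: for the final ``in particular'' to be a classification, one needs the standing convention that every member of $\F$ has at least one edge (equivalently $t\geq 2$), which you do note; otherwise the degenerate cases $\F=\emptyset$ or $\F$ containing an edgeless graph would give densities $1$ or $0$ outside the framework of the formula.
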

For $r\geq 3$ remarkably little is known. One general result for $r$-graphs is the following theorem of Erd\H os. An $r$-graph is \emph{$r$-partite} if its vertices can be partitioned into $r$ classes so that each edge meets each class exactly once. 
\begin{theorem}[Erd\H os \cite{E}]\label{erd:thm}
If $K^{(r)}(t)$ is the complete $r$-partite $r$-graph with $t$ vertices in each class then $\pi(K^{(r)}(t))=0$. 
\end{theorem}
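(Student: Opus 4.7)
The plan is to prove the theorem by induction on $r$, with base case $r=2$ being the classical Kővári--Sós--Turán bound $\mathrm{ex}(n,K_{t,t})=O(n^{2-1/t})$, which immediately gives $\pi(K^{(2)}(t))=\pi(K_{t,t})=0$.

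For the inductive step, assume the result for $(r-1)$-graphs and let $H$ be an $r$-graph on $n$ vertices with $e(H)\geq \varepsilon\binom{n}{r}$. First I would pass to an $r$-partite subgraph. Randomly partition $V(H)=V_1\sqcup\cdots\sqcup V_r$ into equal parts; in expectation a fraction $r!/r^r$ of the edges are ``rainbow'' (one vertex per part), so one can fix a partition yielding an $r$-partite $r$-graph $H'\subseteq H$ with density bounded below by some $\varepsilon'>0$. It suffices to find $K^{(r)}(t)$ inside $H'$, because the $r$-partite structure is exactly what $K^{(r)}(t)$ requires.

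The heart of the argument is a link plus convexity step. For each $(r-1)$-tuple $(v_1,\ldots,v_{r-1})\in V_1\times\cdots\times V_{r-1}$, let $N(v_1,\ldots,v_{r-1})\subseteq V_r$ be its common neighbourhood in $H'$. The degree sum $\sum d(v_1,\ldots,v_{r-1})=e(H')$ is linear in $n^r$, so the average link-size is $\Omega(n)$. Counting pairs (tuple, $t$-subset of its link) gives
\[
\sum_{T\in\binom{V_r}{t}}\bigl|\{(v_1,\ldots,v_{r-1}): T\subseteq N(v_1,\ldots,v_{r-1})\}\bigr|
=\sum_{(v_1,\ldots,v_{r-1})}\binom{d(v_1,\ldots,v_{r-1})}{t},
\]
and by Jensen's inequality applied to $\binom{x}{t}$, the right-hand side is $\Omega(n^{r-1}\cdot n^t)$. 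Since there are only $\binom{|V_r|}{t}=O(n^t)$ choices of $T$, pigeonhole yields a fixed $t$-set $T\subseteq V_r$ contained in the common neighbourhood of $\Omega(n^{r-1})$ tuples. These tuples form an $(r-1)$-partite $(r-1)$-graph $H''$ on $V_1\cup\cdots\cup V_{r-1}$ of positive density. By the inductive hypothesis, for $n$ large enough $H''$ contains a copy of $K^{(r-1)}(t)$; adjoining $T$ produces the desired $K^{(r)}(t)$ in $H'\subseteq H$.

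The main obstacle is calibrating the quantifiers so the induction goes through cleanly: one must formulate the hypothesis as ``for every $\varepsilon>0$ and every $t$, there exists $n_0$'' rather than merely ``$\pi=0$'', so that the density $\Omega(n^{r-1})$ obtained for $H''$ can be plugged back into the $(r-1)$-case with a valid (albeit smaller) constant. The only other mild subtlety is ensuring the random partition step produces parts of comparable size with positive probability, which follows from a routine concentration or greedy argument and is not where the difficulty lies.
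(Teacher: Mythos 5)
The paper states this as a classical result of Erd\H{o}s and cites \cite{E} without reproducing a proof. Your argument is a correct rendering of the standard proof: induction on $r$, with K\H{o}v\'ari--S\'os--Tur\'an as the base case and the link-plus-convexity pigeonhole as the inductive step, which is precisely how Erd\H{o}s argued (his version also extracts the quantitative bound $\mathrm{ex}(n,K^{(r)}(t))=O(n^{r-t^{1-r}})$, which your density formulation discards). The preliminary pass to a balanced $r$-partite subgraph is a harmless cleanup that makes the pigeonhole on $t$-subsets of $V_r$ cleaner, and your remark about carrying the ``for all $\varepsilon>0$ there is $n_0$'' form of the hypothesis through the induction is exactly the right thing to watch.
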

Since $\lim_{t\to \infty}e(K^{(r)}(t))/\binom{tr}{r}=r!/r^r$ and all subgraphs of $K^{(r)}(t)$ are $r$-partite  we have the following simple corollary.
\begin{corollary}\label{erd:cor}
If $\F$ is a family of $r$-graphs then either at least one member of $\F$ is $r$-partite and so $\pi(\F)=0$, or none are $r$-partite and $\pi(\F)\geq r!/r^r$.
\end{corollary}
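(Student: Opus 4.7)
The plan is to split on the dichotomy in the statement and derive each half directly from Theorem \ref{erd:thm} together with the $r$-partite construction.

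For the first case, suppose some $F\in\F$ is $r$-partite. Set $t=|V(F)|$; then $F$ embeds as a subgraph of $K^{(r)}(t)$. I would use two monotonicity properties of the Tur\'an number: (i) enlarging the forbidden family can only make being $\F$-free harder, so $\tr{ex}(n,\F)\le\tr{ex}(n,\{F\})$; and (ii) if $F\subseteq G$ then any $F$-free $r$-graph is automatically $G$-free, so $\tr{ex}(n,\{F\})\le\tr{ex}(n,\{K^{(r)}(t)\})$. Dividing by $\binom{n}{r}$ and passing to the limit gives $\pi(\F)\le\pi(K^{(r)}(t))$, which equals $0$ by Theorem \ref{erd:thm}. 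Hence $\pi(\F)=0$.

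For the second case, suppose that no member of $\F$ is $r$-partite. The strategy is to exhibit a dense $\F$-free construction. The natural candidate is the balanced complete $r$-partite $r$-graph $T_r(n)$ on $n$ vertices, with class sizes $\lfloor n/r\rfloor$ or $\lceil n/r\rceil$. Every subgraph of $T_r(n)$ is $r$-partite, so by hypothesis no member of $\F$ can appear in $T_r(n)$; thus $T_r(n)$ is $\F$-free and $\tr{ex}(n,\F)\ge e(T_r(n))$. The standard estimate $e(T_r(n))=\prod_{i=1}^{r} n_i$ with each $n_i\sim n/r$ yields $e(T_r(n))/\binom{n}{r}\to r!/r^r$, whence $\pi(\F)\ge r!/r^r$.

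The main obstacle is essentially nil: the whole argument is a short bookkeeping exercise once Theorem \ref{erd:thm} is in hand. The only genuine content is the asymptotic computation of $e(T_r(n))/\binom{n}{r}$, which is routine. If I wanted to be careful, I would note the equivalent observation that the same limit $r!/r^r$ appears in the paper's remark immediately preceding the corollary (applied to $K^{(r)}(t)$ with $t=\lfloor n/r\rfloor$), so the density bound in Case 2 can be read off directly without any fresh computation.
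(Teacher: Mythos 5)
Your proof is correct and follows essentially the same route the paper sketches in the sentence immediately preceding the corollary: the upper bound in Case 1 comes from Theorem \ref{erd:thm} and subgraph monotonicity, and the lower bound in Case 2 comes from the balanced complete $r$-partite $r$-graph (equivalently $K^{(r)}(t)$ as $t\to\infty$), all of whose subgraphs are $r$-partite. You have simply spelled out the bookkeeping that the paper leaves implicit.
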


Essentially the only other general result is the following.
\begin{theorem}[Mubayi \cite{Mu} and Pikhurko \cite{Pi}]\label{mp:thm}
For $3\leq r\leq t$ let $H_t^r$ be the $r$-graph with vertices $x_i$ for $1\leq i\leq t$ and $y_{ij}^k$ for $1\leq i<j\leq t$, $1\leq k\leq r-2$ together with edges $x_ix_jy_{ij}^1\cdots y_{ij}^{r-2}$, for $1\leq i<j\leq t$.
\[
\pi(H_{t+1}^r)=\frac{r!}{t^r}\binom{t}{r}.\]
\end{theorem}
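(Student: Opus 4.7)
For the \emph{lower bound}, I would take $G = T^r_t(n)$, the complete balanced $t$-partite $r$-graph with vertex classes $V_1,\ldots,V_t$ of sizes $\lfloor n/t\rfloor$ or $\lceil n/t\rceil$. Its edge density approaches $\binom{t}{r}r!/t^r = \prod_{i=0}^{r-1}(1-i/t)$ as $n\to\infty$, since the fraction of $r$-subsets meeting $r$ distinct classes in one vertex each is exactly this product. I would then check that $T^r_t(n)$ is $H^r_{t+1}$-free by pigeonhole: any embedding places the $t+1$ spine vertices $x_1,\ldots,x_{t+1}$ into only $t$ classes, so two of them, say $x_i$ and $x_j$, lie in a common class $V_k$---but the edge $\{x_i,x_j,y_{ij}^1,\ldots,y_{ij}^{r-2}\}$ would then have two vertices in $V_k$, contradicting $r$-partiteness. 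This gives $\pi(H_{t+1}^r)\ge \binom{t}{r}r!/t^r$.

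For the \emph{upper bound}, I would fix $\varepsilon>0$ and consider an $H^r_{t+1}$-free $r$-graph $G$ on $n$ vertices (with $n$ large) satisfying $e(G)\ge \bigl(\binom{t}{r}r!/t^r + \varepsilon\bigr)\binom{n}{r}$, aiming to derive a contradiction by embedding a copy of $H^r_{t+1}$ into $G$. The plan is a supersaturation plus randomised embedding strategy. For each $(r-2)$-subset $W\subseteq V(G)$, the 2-graph link $L_W = \{\{u,v\}:\{u,v\}\cup W\in E(G)\}$ satisfies $\mathrm{avg}_W\,|L_W|/\binom{n-r+2}{2} = e(G)/\binom{n}{r}$ by double counting. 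To embed $H^r_{t+1}$ I need $t+1$ vertices $x_1,\ldots,x_{t+1}$ together with pairwise-disjoint $(r-2)$-sets $Y_{ij}\subseteq V(G)\setminus\{x_1,\ldots,x_{t+1}\}$ with $\{x_i,x_j\}\cup Y_{ij}\in E(G)$ for $1\le i<j\le t+1$. I would sample a uniformly random such configuration of distinct vertices and compute the expected number of the $\binom{t+1}{2}$ pairs that are realised, applying a convexity (Jensen) inequality to the link-density profile to force the expected number of fully-embedded copies of $H^r_{t+1}$ to be positive.

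The \emph{principal obstacle} is that $\binom{t}{r}r!/t^r < 1 - 1/t = \pi(K_{t+1})$ whenever $t>r$, so density above the target threshold does \emph{not} force a $K_{t+1}$ in any single link $L_W$---the naive ``find $K_{t+1}$ in a dense link then extend'' approach fails. The embedding must instead aggregate densities across many links simultaneously, via a convexity / AM--GM inequality whose extremal case picks out precisely the balanced $t$-partition and yields the product $\prod_{i=0}^{r-1}(1-i/t)$. Setting up this inequality correctly, and verifying the equality case, is the technical heart of the argument, and is what Mubayi and Pikhurko accomplish in their respective original proofs. A minor secondary issue---ensuring the $Y_{ij}$ are pairwise disjoint and disjoint from the $x_i$---is settled by a standard deletion argument since $n$ is much larger than the constant $|V(H^r_{t+1})|$.
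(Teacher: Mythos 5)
The paper offers no proof of Theorem \ref{mp:thm}; it simply cites Mubayi \cite{Mu} and Pikhurko \cite{Pi}, so there is no in-paper argument to compare your proposal against.

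Your lower-bound verification is correct and is the standard one: the complete balanced $t$-partite $r$-graph has limiting edge density $\binom{t}{r}r!/t^r$, and by pigeonhole any embedding of $H^r_{t+1}$ would place two of the $t+1$ spine vertices $x_i,x_j$ in a common class, so the edge $\{x_i,x_j\}\cup Y_{ij}$ could not have its $r$ vertices in $r$ distinct classes. (One small quibble: $\binom{t}{r}r!/t^r < 1-1/t$ already for all $t\ge r\ge 3$, not only $t>r$.)

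For the upper bound, however, what you have written is a plan rather than a proof, and you say so explicitly. You correctly diagnose the obstruction: the average $(r-2)$-link density $e(G)/\binom{n}{r}$ at the conjectured threshold sits strictly below the Tur\'an threshold $1-1/t$ when $r\ge 3$, so one cannot simply locate a $K_{t+1}$ in some single link and then extend. And you correctly observe that some convexity/aggregation over many links simultaneously is needed, with the balanced $t$-partition as the extremal case. But the specific inequality that makes this rigorous is the entire technical content of Mubayi's argument (and Pikhurko's sharper result goes further via a stability analysis), and your proposal defers to the original sources at exactly that point rather than supplying it. As a proof the upper bound is therefore missing; as a description of where the genuine difficulty lies, your account is accurate.
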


Attention has focused mainly on Question \ref{qu:1}, in particular a lot of work has gone into determining or giving bounds for the Tur\'an density of particularly simple 3-graphs such as $K_4^-=\{123, 124,134\}$, $K_4^{(3)}=\{123, 124,134,234\}$ and $F_{3,2}=\{123, 145, 245, 345\}$. 

In Section \ref{simple:sec} we give some new Tur\'an results for individual 3-graphs. In particular we give the first examples of single $3$-graphs with Tur\'an density $5/9$ for which Tur\'an's construction $T_n$ is asymptotically extremal (see Section \ref{5.9:sec} for definitions).

In Section \ref{finite:sec} we focus on Question \ref{qu:2}, in particular giving the first examples of irrational Tur\'an densities of finite families of $3$-graphs.  

We then return to the classical ``Tur\'an Problem'' of determining $\pi(K_4^{(3)})$. 

Given an exact Tur\'an density result for a family of $r$-graphs $\F$ there are two very natural questions one can ask. Firstly, what is the exact Tur\'an number $\tr{ex}(n,\F)$? Secondly, is there a ``stability'' result saying that all almost extremal $\F$-free $r$-graphs have essentially the same structure? Pikhurkho \cite{Pexact} answered both of these questions in the case of $\F=\{K_4^{(3)},E_1\}$, where $E_1$ is the $3$-graph with $4$ vertices and a single edge. We are able to give stability versions of all of our results from Sections \ref{simple:sec} and \ref{turan:sec}. Luckily once we have a ``flag algebra proof'' of the Tur\'an density of each family we can prove stability without having to consider each family in turn. Essentially  we prove one stability  result for each construction,  the details are given in Section \ref{stab:sec}.

Stability probably also holds for the results in Section \ref{finite:sec} but we have not proved this. The question of determining the exact Tur\'an number for each of the families we consider seems much more difficult although it is plausible that for all of our results $\tr{ex}(n,\F)$ is given by the corresponding construction for all sufficiently large $n$.

All discussion  of the computational proofs is deferred to the final section, with transcripts of the actual proofs forming a separate appendix. However we emphasise that all of our proofs can be verified using only integer operations and hence are genuine proofs rather than numerical results with the potential for rounding errors. These proofs are set in Razborov's flag algebra framework \cite{RF} and make heavy use of semi-definite programming (see \cite{BT} and \cite{R4}).

A key tool we will make use of is the ``blow-up'' of an $r$-graph. Given an $r$-graph $F$ and an integer $t\geq 1$ the \emph{blow-up} $F(t)$ is the $3$-graph formed by replacing each vertex of $F$ with a class of $t$ vertices and inserting a complete $r$-partite $r$-graph between any vertex classes corresponding to an edge in $F$. Given a family $\F=\{F_1,\ldots,F_s\}$ of $r$-graphs and an integer vector $\mathbf{t}=(t_1,\ldots,t_s)$ with each $t_i\geq 1$, we define the $\mathbf{t}$-\emph{blow-up} of $\F$ to be $\F(\mathbf{t})=\{F_i(t_i):1\leq i\leq s\}$.

The following result will be extremely useful.
\begin{theorem}[Brown and Simonovits \cite{BS}]\label{bs1:thm}
If $\F=\{F_1,\ldots,F_s\}$ is a family of $r$-graphs and $\mathbf{t}=(t_1,\ldots,t_s)$ is an integer vector with each $t_i\geq 1$ then $\pi(\F(\mathbf{t}))=\pi(\F)$.
\end{theorem}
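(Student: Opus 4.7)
The plan is to prove the two inequalities separately. The direction $\pi(\F(\mathbf{t})) \geq \pi(\F)$ is immediate: each $F_i$ embeds as a subgraph of $F_i(t_i)$ (take one vertex from each vertex class of the blow-up), so every $\F$-free $r$-graph is automatically $\F(\mathbf{t})$-free. This gives $\tr{ex}(n,\F) \leq \tr{ex}(n,\F(\mathbf{t}))$ for all $n$, and the inequality follows by dividing by $\binom{n}{r}$ and letting $n \to \infty$.

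For the reverse direction $\pi(\F(\mathbf{t})) \leq \pi(\F)$, I would argue by contradiction. Suppose some $\delta > 0$ and arbitrarily large $n$ admit an $\F(\mathbf{t})$-free $r$-graph $G$ on $n$ vertices with $e(G) \geq (\pi(\F) + \delta)\binom{n}{r}$. The first step is a standard supersaturation argument: choosing $m = m(\delta)$ large and sampling random $m$-subsets of $V(G)$, a positive fraction must, by the definition of $\pi(\F)$, induce a copy of some member of $\F$; double-counting and pigeonholing over the index then yields a single $F_i \in \F$ that appears at least $\eta \cdot n^{v(F_i)}$ times in $G$, for some constant $\eta = \eta(\delta, \F) > 0$.

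The second step extracts the blow-up $F_i(t_i)$ from these many copies, in the spirit of Erd\H os's theorem (Theorem \ref{erd:thm}). Set $v = v(F_i)$. I would pass to the auxiliary $v$-partite $v$-graph on vertex classes $V_1, \ldots, V_v$ (each a copy of $V(G)$ placed in a preliminary random partition) whose hyperedges are the ordered $v$-tuples spanning a copy of $F_i$ in $G$. This auxiliary $v$-graph has positive density, so by the Erd\H os theorem on complete $v$-partite $v$-graphs it contains a $K^{(v)}(t_i)$, and the corresponding $v \cdot t_i$ vertices of $G$ form the desired blow-up $F_i(t_i) \subseteq G$, contradicting $\F(\mathbf{t})$-freeness.

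The main obstacle is the blow-up extraction step: one must ensure that the $v$ classes of the final blow-up are vertex-disjoint, which is what the initial random partition of $V(G)$ (at a cost of a constant factor $v^{-v}$ in $\eta$) achieves, and that $n$ is taken large enough in terms of $\eta$ and $\max_i t_i$ for the $K^{(v)}(t)$ result to apply. Both the supersaturation ingredient and the Erd\H os blow-up lemma are by now classical, and once they are combined in this order the theorem follows cleanly.
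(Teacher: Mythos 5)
The paper states this as a cited result of Brown and Simonovits and does not give a proof, so there is no in-paper argument to compare against. Your reconstruction is correct and is the standard one: the easy inequality via $F_i\subseteq F_i(t_i)$, then supersaturation (Katona--Nemetz--Simonovits averaging plus pigeonhole over $i$) to extract $\eta n^{v(F_i)}$ labelled copies of some $F_i$, and finally Erd\H os's theorem on complete $v$-partite $v$-graphs applied to the auxiliary hypergraph of labelled embeddings --- with the preliminary random $v$-colouring of $V(G)$ correctly handling the disjointness of the colour classes at a cost of $v^{-v}$ in the constant --- to produce $F_i(t_i)\subseteq G$. This is essentially the argument in Brown--Simonovits (and in Erd\H os--Simonovits supersaturation), so your proposal matches the known route.
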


In particular we have the following corollary that will often simplify the computations we perform. We will write $F\leq G$ to mean ``$F$ is contained in a blow-up of $G$''.
\begin{corollary}\label{sat:cor}
If $\F$ is a family of $r$-graphs and $G_1,G_2$ are $r$-graphs with $G_1\leq G_2$ then
\begin{itemize}\item[(i)] $\pi(\F\cup G_1)\leq \pi(\F\cup G_2)$,
\item[(ii)] $\pi(\F\cup G_1)=\pi(\F\cup G_1\cup G_2)$.
\end{itemize}
\end{corollary}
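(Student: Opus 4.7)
The plan is to deduce both parts from the Brown--Simonovits blow-up theorem (Theorem \ref{bs1:thm}) together with the simple monotonicity principle that forbidding a larger family can only decrease the Tur\'an density. I would first prove (ii) and then derive (i) as an easy consequence.

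Unpacking the hypothesis, $G_1\leq G_2$ means that there exists an integer $t\geq 1$ such that $G_1\subseteq G_2(t)$. The key observation is then that any $G_1$-free $r$-graph $H$ is automatically $G_2(t)$-free: if $H$ contained a copy of $G_2(t)$, then by restricting to the relevant subset of vertices it would contain a copy of $G_1\subseteq G_2(t)$, contradicting $G_1$-freeness. Consequently, for every $n$ the class of $(\F\cup G_1)$-free $r$-graphs on $n$ vertices coincides with the class of $(\F\cup G_1\cup G_2(t))$-free ones, and in particular
\[
\tr{ex}(n,\F\cup G_1)=\tr{ex}(n,\F\cup G_1\cup G_2(t)).
\]
Dividing by $\binom{n}{r}$ and letting $n\to\infty$ yields $\pi(\F\cup G_1)=\pi(\F\cup G_1\cup G_2(t))$. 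Now apply Theorem \ref{bs1:thm} to the family $\F\cup G_1\cup G_2$ with the blow-up vector that leaves every member of $\F$ and $G_1$ unchanged and blows $G_2$ up to $G_2(t)$; this gives $\pi(\F\cup G_1\cup G_2(t))=\pi(\F\cup G_1\cup G_2)$, establishing (ii).

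Part (i) is then immediate: by (ii),
\[
\pi(\F\cup G_1)=\pi(\F\cup G_1\cup G_2)\leq \pi(\F\cup G_2),
\]
where the final inequality just uses that enlarging the forbidden family cannot increase the Tur\'an density (every $(\F\cup G_2)$-free $r$-graph remains $(\F\cup G_2)$-free after dropping the requirement of $G_1$-freeness, so $\tr{ex}(n,\F\cup G_1\cup G_2)\leq\tr{ex}(n,\F\cup G_2)$).

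There is no serious obstacle here; the entire argument is essentially bookkeeping built around the single substantive input, Theorem \ref{bs1:thm}. The only point requiring any care is the direction of the containment in the key observation---one must recognise that $G_1\subseteq G_2(t)$ makes $G_1$-freeness a \emph{stronger} condition than $G_2(t)$-freeness, which is what allows us to add $G_2(t)$ to the forbidden family at no cost.
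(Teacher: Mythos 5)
Your proof is correct and rests on the same two ingredients as the paper's: the Brown--Simonovits blow-up theorem and the elementary monotonicity of Tur\'an density under enlarging the forbidden family. The only difference is organisational — you prove (ii) first by observing that $(\F\cup G_1)$-freeness and $(\F\cup G_1\cup G_2(t))$-freeness coincide and then invoking Theorem \ref{bs1:thm}, whereas the paper proves (i) first via $\pi(\F\cup G_1)\leq\pi(\F\cup G_2(t))=\pi(\F\cup G_2)$ and derives (ii) from (i); both routes are equally valid and of comparable length.
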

\begin{proof} Let $t\geq 1$ satisfy $G_1\subseteq G_2(t)$.
Theorem \ref{bs1:thm} implies that $\pi(\F\cup G_2(t))=\pi(\F\cup G_2)$. While  $G_1\subseteq G_2(t)$ implies that $\pi(\F\cup G_1)\leq \pi(\F\cup G_2(t))$. Hence (i) holds.

For (ii) we note that $\pi(\F\cup G_1)\geq \pi(\F\cup G_1\cup G_2)$ is trivial. While (i) implies that
\[
\pi(\F\cup G_1)=\pi(\F\cup G_1\cup G_1)\leq \pi(\F\cup G_1\cup G_2).\]
\end{proof}
For a detailed description of how we apply Corollary \ref{sat:cor} (to prove Theorem \ref{2.9:thm}) see the discussion in Section \ref{comp:sec}. Essentially we apply part (ii) repeatedly: if $F\leq G$ for each $G$ in some family $\mc{G}$ then $\pi(F)=\pi(F\cup \mc{G})$. This often leads to much more tractable computational problems.

When investigating new Tur\'an density results of $r$-graphs we have to be clear about what makes a result new. Consider the following situation: we have an $r$-graph $G$ whose Tur\'an density is known together with a sequence of asymptotically extremal examples $\{G_n\}_{n=1}^\infty$ (i.e.~$G_n$ is a $G$-free $r$-graph of order $n$  and $\lim_{n\to \infty} e(G_n)/\binom{n}{r}=\pi(G)$). Given a subgraph $F$ of $G$ we obviously know that $\pi(F)\leq \pi(G)$ and moreover if $G_n$ is $F$-free for all $n\geq 1$ then $\pi(F)=\pi(G)$. Corollary \ref{sat:cor} sometimes allows us to deduce new Tur\'an densities by checking for ``containment in blow-ups''. 
We will not be interested in results that are implied by known Tur\'an density results by taking blow-ups and applying Corollary \ref{sat:cor}. (See the remark following Theorem \ref{ff:thm} for an example of such a result.)

Note that checking if $F\leq G$ can be computationally difficult. For example suppose $r=2$ and $G=K_3$. Checking if a given graph $F$ satisfies $F\leq K_3$ is equivalent to determining whether $F$ is $3$-colourable: a well known NP-complete problem.

 An $r$-graph $F$ is said to be \emph{covering} if every pair of vertices from $V(F)$ belongs to an edge in $F$. For example, complete $r$-graphs are covering. 
Covering $r$-graphs are easier to deal with when checking containment in blow-ups.
\begin{lemma}\label{cover:lem}
If $F$ and $G$ are $r$-graphs and $F$ is covering then $F\leq G$ iff $F\subseteq G$.
\end{lemma}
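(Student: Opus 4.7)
The plan is to prove the non-trivial direction: assuming $F \leq G$, exhibit a copy of $F$ sitting inside $G$ itself. The reverse implication is immediate since $G = G(1)$, so $F \subseteq G$ trivially gives $F \leq G$.

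Suppose $F \subseteq G(t)$ for some $t \geq 1$. Let $V(G) = \{v_1, \ldots, v_n\}$ and write $V(G(t)) = V_1 \cup \cdots \cup V_n$ where $V_i$ is the class of size $t$ replacing $v_i$. Define the projection $p \colon V(G(t)) \to V(G)$ sending each vertex of $V_i$ to $v_i$, and let $\phi \colon V(F) \to V(G)$ be the composition of the inclusion $V(F) \hookrightarrow V(G(t))$ with $p$. The goal is to verify that $\phi$ is an injective edge-preserving map, which would realize $F$ as a subgraph of $G$.

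The edge-preservation is essentially automatic from the definition of a blow-up: any edge of $G(t)$ uses one vertex from each of $r$ distinct classes $V_{i_1}, \ldots, V_{i_r}$ such that $\{v_{i_1}, \ldots, v_{i_r}\}$ is an edge of $G$; hence $p$ maps any edge of $G(t)$ to an edge of $G$, and in particular maps the image of any edge of $F$ to an edge of $G$ of size exactly $r$.

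The key point, and the only place the covering hypothesis is used, is injectivity of $\phi$. Suppose for contradiction that $u \neq u'$ in $V(F)$ satisfy $\phi(u) = \phi(u')$; then their images in $V(G(t))$ lie in the same class $V_i$. Because $F$ is covering, the pair $\{u,u'\}$ is contained in some edge $e \in E(F)$. The image of $e$ in $V(G(t))$ is an edge of $G(t)$, but it then has two vertices in the same class $V_i$, contradicting the fact that edges of a blow-up meet every class in at most one vertex. Therefore $\phi$ is injective, and $\phi$ embeds $F$ into $G$, completing the proof.

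The only subtlety worth double-checking is the observation that edges of $G(t)$ are genuinely $r$-partite across the classes $V_1, \ldots, V_n$ (so no edge has two vertices in the same class); this follows directly from the definition of blow-up given earlier in the paper. There is no real obstacle here — the proof is a short structural argument once the projection $p$ is in place.
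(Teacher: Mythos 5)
Your proof is correct and is essentially the same argument as the paper's: both use the covering hypothesis to show that distinct vertices of $F$ must land in distinct blow-up classes, so that projecting back to $V(G)$ embeds $F$ in $G$. You simply spell out the projection map $p$ and the trivial reverse direction, which the paper leaves implicit.
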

\begin{proof}
If $F$ is a subgraph of $G(t)$ for some $t\geq 1$ then each vertex in $V(F)$ belongs to a different class in $G(t)$ (since there is an edge of $F$ containing any pair of vertices). Thus $F$ is a subgraph of $G$.\end{proof}
\section{Tur\'an densities of individual $3$-graphs}\label{simple:sec}
We require a couple of basic definitions. For an integer $n\geq 1$ let $[n]=\{1,2,\ldots,n\}$. If $[n]=A_1\cup A_2\cup \cdots\cup A_k$ is a partition then we say that it is \emph{balanced} if $||A_i|-|A_j||\leq 1$ for all $i,j\in [k]$.
\subsection{Density 2/9}
\begin{figure}[ht]
\begin{center}
\includegraphics[height=3cm]{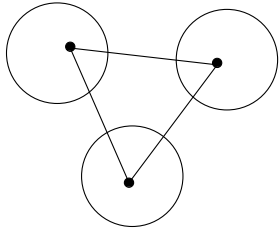}
\caption{The balanced complete tripartite $3$-graph, $S_n$.}\label{s3:fig}
\end{center}
\end{figure}
Given a tripartition $[n]=V_0\cup V_1 \cup V_2$ let $S(V_0,V_1,V_2)$ denote the \emph{complete tripartite} 3-graph with vertex classes $V_0,V_1,V_2$. Let $S_n$ denote the complete tripartite $3$-graph with the maximum number of edges (given by a balanced tripartition of $[n]$). Note that $e(S_n)=\lfloor \frac{n}{3}\rfloor \lfloor \frac{n+1}{3}\rfloor \lfloor \frac{n+2}{3}\rfloor$, and so $\lim_{n\to \infty} e(S_n)/\binom{n}{3}= 2/9$. Since all subgraphs of $S_n$ are tripartite this implies that any non-tripartite 3-graph $F$ satisfies $\pi(F)\geq 2/9$.

The first Tur\'an-type result for non-tripartite $3$-graphs was due to Bollob\'as \cite{B}. Let $K_4^-=\{123, 124,134\}$ and $F_5=\{123,124,345\}$.
\begin{theorem}[Bollob\'as \cite{B}]\label{b:thm} If $\F=\{K_4^-,F_5\}$ then $\tr{ex}(n,\F)=e(S_n)$. In particular $\pi(\F)=2/9$.\end{theorem}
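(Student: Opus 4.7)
My plan is to prove the matching lower and upper bounds. The upper bound will follow from the stronger structural claim that every $\F$-free $3$-graph $H$ is \emph{tripartite}: $V(H)$ partitions into three classes $V_1 \cup V_2 \cup V_3$ with every edge of $H$ meeting each class in exactly one vertex. From such a partition $e(H) \le |V_1||V_2||V_3|$, which is maximised over $\sum|V_i|=n$ by the balanced partition and equals $e(S_n)$. The lower bound is a direct check: every edge of $S_n$ is rainbow, and any embedding of $K_4^-$ or $F_5$ contains two edges sharing a pair, which forces two further vertices into a common class and so violates the rainbow condition on the remaining edge of the forbidden subgraph.

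The engine of the upper bound is the following \emph{codegree lemma}: if $\{a,b,c\}, \{a,b,d\} \in H$ with $c \ne d$, then no edge of $H$ contains the pair $\{c,d\}$. Indeed, $K_4^-$-freeness on the $4$-set $\{a,b,c,d\}$ rules out both $\{a,c,d\}$ and $\{b,c,d\}$, while any $\{c,d,x\}$ with $x \notin \{a,b,c,d\}$ produces an $F_5$ under the map $(1,2,3,4,5) \mapsto (a,b,c,d,x)$. Now fix any $e_0 = \{a,b,c\} \in H$ and set
\[ V_1 = \{a\} \cup \{w : \{b,c,w\} \in H\}, \quad V_2 = \{b\} \cup \{w : \{a,c,w\} \in H\}, \quad V_3 = \{c\} \cup \{w : \{a,b,w\} \in H\}. \]
By $K_4^-$-freeness the sets $V_1, V_2, V_3$ are pairwise disjoint (a vertex in two of them would close a $K_4^-$ with $e_0$), and the codegree lemma, applied to each pair of $e_0$, shows each $V_i$ is independent in the shadow of $H$. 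Consequently any edge of $H$ contains at most one vertex from each $V_i$.

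The main obstacle is to handle the remaining ``type-$0$'' vertices $Z := V(H) \setminus (V_1 \cup V_2 \cup V_3)$ and verify that the intended tripartition extends consistently to all of $V(H)$. The plan is a case analysis on an arbitrary edge $\{u,v,w\} \in H$: if two of its vertices lie in the same $V_i$, the codegree lemma for the defining pair immediately contradicts this; if the edge meets $Z$, the types of its non-$Z$ vertices force the $Z$-vertices into a unique $V_i$, and apparent conflicts arising from different edges through the same $Z$-vertex can be ruled out by further applications of $K_4^-$-freeness and $F_5$-freeness (each such conflict should produce one of the two forbidden configurations). Isolated $Z$-vertices may be placed arbitrarily. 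Once a consistent tripartition is in hand, AM-GM on the class sizes gives $e(H) \le e(S_n)$, matching the lower bound.
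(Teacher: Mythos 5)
The paper itself does not prove this theorem — it is cited from Bollob\'as's 1974 paper — so I can only assess your argument on its own merits, and there is a fatal flaw in it.

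Your ``codegree lemma'' is correct, your lower bound is correct, and the local structure you build around $e_0$ (the classes $V_1,V_2,V_3$ are pairwise disjoint and each is independent in the shadow) is all fine. The problem is the headline structural claim: it is simply \emph{not true} that every $\{K_4^-,F_5\}$-free $3$-graph is tripartite. Take $H$ on $\{1,\dots,7\}$ with edges $\{123\},\{456\},\{147\},\{257\},\{367\}$. No two of these edges share a pair of vertices, so $H$ cannot contain $K_4^-$ or $F_5$ (both of which contain two edges meeting in a pair). Yet $H$ is not tripartite: the three edges through vertex $7$ force $c(1),\dots,c(6)$ to avoid $c(7)$, leaving only two colours for $\{1,2,3\}$, so $\{123\}$ cannot be rainbow. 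This $H$ makes the gap in your Step~3 concrete: with $e_0=\{123\}$ you get $V_1=\{1\},V_2=\{2\},V_3=\{3\}$ and $Z=\{4,5,6,7\}$, and the edge $\{456\}$ lies entirely inside $Z$. Your case analysis ``the types of its non-$Z$ vertices force the $Z$-vertices into a unique $V_i$'' has nothing to act on for such an edge, and in fact no consistent extension of your tripartition to $Z$ exists.

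What this means is that you cannot hope to prove tripartiteness for an arbitrary $\F$-free $H$; any correct proof of $\mathrm{ex}(n,\F)\le e(S_n)$ must use more than the codegree lemma applied to a single edge — for instance edge-maximality, a symmetrisation/cloning argument on vertices of codegree zero, or a global counting argument. (Note also that the weaker ``only count edges meeting $V_1\cup V_2\cup V_3$'' does not suffice either, since in principle $Z$ could carry many edges.) The codegree lemma is the right starting tool — it does show, for example, that if some pair has codegree $\ge 2$ then some other pair has codegree $0$ — but the step from there to the global bound requires a genuinely different idea than the one you sketch.
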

This was followed by the Tur\'an result for the single $3$-graph $F_5$.
\begin{theorem}[Frankl and F\"uredi \cite{FF}]\label{ff:thm}
If $n\geq 3000$ then $\tr{ex}(n,F_5)=e(S_n)$. In particular $\pi(F_5)=2/9$.\end{theorem}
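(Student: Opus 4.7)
The lower bound $\operatorname{ex}(n,F_5)\ge e(S_n)$ is immediate. Indeed, $S_n$ is $F_5$-free: in any tripartition into classes $A,B,C$, an embedding of $F_5=\{123,124,345\}$ would force vertices $1,2,3,4$ to lie in pairwise distinct classes (since $123,124$ share the pair $\{1,2\}$ and are both transversal triples), which is impossible with only three classes. So the real work is the matching upper bound.

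For the upper bound, my plan is to exploit the very strong \emph{local} constraint that $F_5$-freeness imposes. Concretely, if $abc,abd\in H$ both contain the pair $\{a,b\}$, then the edge set $H$ contains no triple of the form $cdx$ with $x\notin\{a,b\}$. Writing $N(ab)=\{w:abw\in H\}$ and $d(ab)=|N(ab)|$, this means that any two elements $w_i,w_j\in N(ab)$ can only extend to at most two edges (namely $aw_iw_j$ and $bw_iw_j$), and in particular the link pair $\{w_i,w_j\}$ never joins a vertex outside $\{a,b\}$. This rules out many configurations and, via standard double-counting, forces the pair-degree sequence to behave much like that of a tripartite 3-graph.

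The natural baseline is Bollob\'as's theorem (Theorem \ref{b:thm}): if $H$ is $F_5$-free and contains no $K_4^-$ then already $e(H)\le e(S_n)$. So I may assume $H$ contains a copy of $K_4^-$, say $\{abc,abd,acd\}$. The three pairs $\{c,d\},\{b,d\},\{b,c\}$ then each become ``heavy restricted'' pairs by the local constraint above, severely limiting how many edges can pass through them. The plan is to run a stability-type argument: first use a supersaturation/averaging argument to show that an $F_5$-free $H$ with $e(H)\ge e(S_n)$ must be $o(n^3)$-close to a tripartite graph $S(V_0,V_1,V_2)$, then bootstrap using the local constraint to show that any non-tripartite edge one keeps costs strictly more edges than it contributes, when $n$ is large enough. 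Finally, a vertex-by-vertex cleanup (for each vertex $v$, move it to the class in which it has the most transversal edges, and show the total count weakly increases and non-tripartite edges disappear) forces $H\subseteq S(V_0,V_1,V_2)$, after which balancing the partition sizes gives $e(H)\le e(S_n)$.

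The main obstacle is the transition from the asymptotic stability statement (``$H$ is close to tripartite'') to the exact conclusion $e(H)\le e(S_n)$. This requires quantitative control of the tripartition error in terms of the number of $K_4^-$-copies in $H$ and a delicate bookkeeping argument showing that each retained ``bad'' (non-transversal) edge forbids, via the $F_5$-constraint, strictly more than one tripartite edge when $n$ is sufficiently large. The threshold $n\ge 3000$ should arise precisely from solving the resulting inequality between the linear-in-$n$ gain from a bad edge and the quadratic-in-$n$ loss it induces through the forbidden region argument.
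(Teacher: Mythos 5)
The paper does not prove Theorem~\ref{ff:thm}; it is cited from Frankl and F\"uredi~\cite{FF}. The only argument the paper attaches to it is the remark immediately afterwards, which deduces \emph{just the density conclusion} $\pi(F_5)=2/9$ in one line from Bollob\'as's Theorem~\ref{b:thm} via Corollary~\ref{sat:cor}(ii): since $F_5\subseteq K_4^-(2)$, we have $\pi(F_5)=\pi(\{F_5,K_4^-\})=2/9$. So if your goal is only the Tur\'an density, you are taking a much longer road than the paper does.

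For the exact statement $\operatorname{ex}(n,F_5)=e(S_n)$, your proposal has a genuine gap: everything after ``the plan is to run a stability-type argument'' is a programme, not a proof. You do not establish the stability statement (that near-extremal $F_5$-free $3$-graphs are $o(n^3)$-close to tripartite), you do not carry out the ``each bad edge costs more than it contributes'' estimate, and you do not verify that the vertex-by-vertex cleanup terminates with a genuine tripartition rather than oscillating. These are precisely the hard parts, and you acknowledge as much in your final paragraph; as written this is an outline, not an argument that could be checked. Separately, your verification that $S_n$ is $F_5$-free contains an error of reasoning: from the transversal edges $123$ and $124$ one does \emph{not} conclude that $1,2,3,4$ lie in pairwise distinct classes (which is indeed impossible), but rather that $3$ and $4$ must lie in the \emph{same} class (the one avoiding $1$ and $2$), whence $345$ cannot be transversal. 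The conclusion is correct but the stated reason is not. Finally, note that Frankl and F\"uredi's original argument is a direct structural one rather than a ``stability then cleanup'' argument, so even a completed version of your sketch would be a different (and likely much longer) route to the same theorem.
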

In fact, as a Tur\'an density result, Theorem \ref{ff:thm} does not meet our definition of a new result since Corollary \ref{sat:cor} (ii) allows us to deduce that $\pi(F_5)=2/9$ from Theorem \ref{b:thm}: take $\F=\emptyset$, $G_1=F_5$, $G_2=K_4^-$ and note that $F_5\subseteq K_4^-(2)$.

Theorem \ref{mp:thm} tells us that we also have $\pi(H_4^3)=2/9$, 
however this is implied by $\pi(F_5)=2/9$ and Theorem \ref{bs1:thm}, since $H_4^3 \subseteq F_5(3)$.

The following new result implies all of the aforementioned results.
\begin{theorem}\label{2.9:thm}
If $H=\{123,124,345,156\}$ then $\pi(H)=2/9$.
\end{theorem}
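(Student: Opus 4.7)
The plan is to establish the matching lower and upper bounds separately.

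For the lower bound $\pi(H)\geq 2/9$, the first step is to verify that $H$ is not tripartite. In any proper vertex $3$-colouring $c$, the edges $123$ and $124$ would force $c(3)=c(4)$ (each must be the unique colour distinct from $c(1)$ and $c(2)$), and then the edge $345$ demands $c(3)\neq c(4)$, a contradiction. Consequently every tripartite $3$-graph, in particular $S_n$, is $H$-free, so $\mathrm{ex}(n,H)\geq e(S_n)$ and $\pi(H)\geq 2/9$.

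For the upper bound $\pi(H)\leq 2/9$, the plan is to combine the flag algebra method with the ``saturation trick'' of Corollary~\ref{sat:cor}(ii). Attacking $\{H\}$ directly with flag algebras at an affordable flag size is likely to be intractable, since $H$ has only four edges and too few sub-configurations are forbidden to give the SDP sufficient leverage. Instead I would enumerate small $3$-graphs $G$ (on $5$ or $6$ vertices) satisfying $H\leq G$, meaning $H$ embeds into some blow-up $G(t)$, and let $\mathcal{G}$ be the resulting collection. Iterated application of Corollary~\ref{sat:cor}(ii) then yields $\pi(H)=\pi(\{H\}\cup\mathcal{G})$. This enlarged forbidden family supplies many additional structural constraints and should bring the problem within reach of the SDP.

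Once the augmented family is fixed, the remainder is standard flag algebra methodology: choose a flag size $N$ (typically $7$ or $8$), express the bound $\pi(\{H\}\cup\mathcal{G})\leq 2/9$ as a semi-definite program, solve it numerically, and round the floating-point PSD matrices to a rational certificate that preserves both positive semi-definiteness and the bound. The rounding step is the main obstacle: since $S_n$ is asymptotically extremal the SDP will be tight at $2/9$, so naive rounding breaks the bound, and the resulting certificate must ultimately be verified using only integer arithmetic. I would rely on the authors' \prog machinery to perform this reduction. Combined with the lower bound this gives $\pi(H)=2/9$, and the corresponding stability statement would follow from the general framework described in Section~\ref{stab:sec}.
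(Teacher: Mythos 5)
Your proposal follows the paper's approach exactly: the lower bound via the non-tripartiteness of $H$ (so $S_n$ is $H$-free), and the upper bound via Corollary~\ref{sat:cor}(ii) to enlarge the forbidden family before running the flag-algebra SDP with a rounded rational certificate. The paper specifically augments $\{H\}$ with $H_1=\{123,124,134\}$, $H_2=\{123,124,125,345\}$ and $H_3=\{123,124,135,245\}$ (each satisfying $H\leq H_i$), working with $6$-vertex flags rather than $7$ or $8$, but these are implementation details rather than differences of method.
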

Note that $H$ is not contained in a blow-up of $F_5$ so this is a genuinely new result.

The proof of Theorem \ref{2.9:thm} uses Razborov's flag algebras framework \cite{RF}, \cite{R4} as well as Corollary \ref{sat:cor}. It is a straightforward calculation in this setting. For a general discussion of our methods see Section \ref{comp:sec}. A detailed computational proof can be found in the appendix file \texttt{2-9-prf.txt}. 
\subsection{Density 4/9}
\begin{figure}[ht]
\begin{center}
\includegraphics[height=3cm]{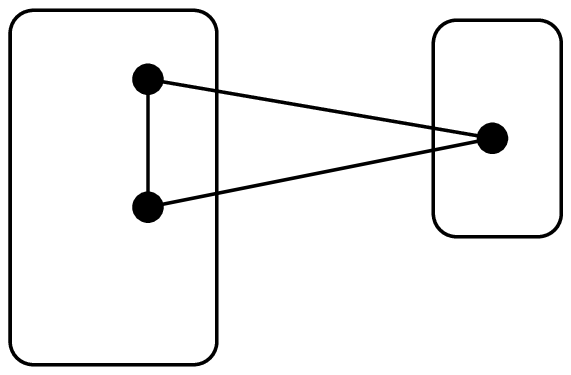}
\caption{A $(2,1)$-colourable  $3$-graph.}\label{hn:fig}
\end{center}
\end{figure}
Given a bipartition $[n]=V_0\cup V_1$ let $J(V_0,V_1)$ denote the 3-graph with vertex set $[n]$ and edges consisting of all triples meeting $V_0$ in two vertices and $V_1$ in one vertex. We call this the \emph{complete $(2,1)$-colourable $3$-graph} with classes $V_0$ and $V_1$. We say that a $3$-graph $G$ is \emph{$(2,1)$-colourable} if $G$ is isomorphic to a subgraph of $J(V_0,V_1)$ for some bipartition $[n]=V_0\cup V_1$. Let $J_n$ denote the $(2,1)$-colourable 3-graph of order $n$ with the maximum number of edges. A simple calculation shows that $J_n=J(V_0,V_1)$ for some bipartition with $|V_0|$ approximately twice as large as $|V_1|$ and so it is easy to check that $\lim_{n\to \infty} e(J_n)/\binom{n}{3}=4/9$. Hence any $3$-graph $F$ that is not $(2,1)$-colourable satisfies $\pi(F)\geq 4/9$. 

An example of a non-$(2,1)$-colourable  $3$-graph is $F_{3,2}=\{123,145,245,345\}$.
\begin{theorem}[F\"uredi, Pikhurko and Simonovits \cite{FPS}]\label{fps:thm}
For all $n\geq 3$ we have $\tr{ex}(n,F_{3,2})=e(J_n)=\max_{k}(n-k)\binom{k}{2}$. In particular $\pi(F_{3,2})=4/9$.\end{theorem}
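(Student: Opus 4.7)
For the lower bound, exhibit $J_n = J(V_0,V_1)$ with $|V_0|$ chosen to maximise $\binom{|V_0|}{2}|V_1|$ (so $|V_0| \approx 2n/3$) and verify that $J(V_0,V_1)$ is $F_{3,2}$-free. Suppose for contradiction that $F_{3,2}$ embeds into $J(V_0,V_1)$ via $\phi$. The pair $\{\phi(4),\phi(5)\}$ lies in the three edges $\phi(\{i,4,5\})$ for $i=1,2,3$; since each such edge must have exactly two vertices in $V_0$ and one in $V_1$, either both of $\phi(4),\phi(5)$ lie in $V_0$ (forcing $\phi(1),\phi(2),\phi(3)\in V_1$, so $\phi(\{1,2,3\})$ has no vertex in $V_0$), or exactly one of them lies in $V_0$ (forcing $\phi(1),\phi(2),\phi(3)\in V_0$, so $\phi(\{1,2,3\})$ has no vertex in $V_1$), or both lie in $V_1$ (impossible, since then each shared edge has at most one vertex in $V_0$). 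In every case the edge $\phi(\{1,2,3\})$ violates the $(2,1)$-pattern. Maximising $(n-k)\binom{k}{2}$ in $k$ yields $e(J_n)$ and the asymptotic density $4/9$.

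For the upper bound, the starting point is the local characterisation: $G$ is $F_{3,2}$-free if and only if for every pair $\{a,b\}$ with codegree $d(ab)\geq 3$, the neighbourhood $N(ab) = \{x : abx \in E(G)\}$ spans no edge of $G$. Using this, I would aim for the density bound $\pi(F_{3,2}) \leq 4/9$ together with a stability statement, namely that every $F_{3,2}$-free $G$ with at least $(4/9-o(1))\binom{n}{3}$ edges is within $o(n^3)$ edges of some $J(V_0,V_1)$. Flag algebras furnish a clean proof: since $F_{3,2}$ has only five vertices, a semidefinite programme at a flag type of size $3$ or $4$ should certify $4/9$ exactly and simultaneously pin down $J_n$ as the unique near-extremal structure. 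A purely combinatorial alternative is a Zykov-style symmetrisation on link graphs: partition vertices by their role in the heavy pairs (those with large codegree) and iteratively refine until an approximate $(V_0,V_1)$-split emerges, using the local condition to bound each modification.

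Once the density bound and stability are in hand, bootstrap to the exact result as follows. For an extremal $F_{3,2}$-free $G$, stability produces a candidate bipartition $(V_0,V_1)$; then for each vertex $v$ replace its link with the link it would have in $J(V_0,V_1)$, verifying that $F_{3,2}$-freeness is preserved (a local check, since $J(V_0,V_1)$ itself is $F_{3,2}$-free) and that the edge count does not decrease. Iterating collapses $G$ onto $J(V_0,V_1)$, and maximising over bipartitions gives $e(G) \leq e(J_n)$ with equality only at $J_n$.

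The main obstacle is handling \emph{all} $n \geq 3$ rather than only large $n$: the $o(n^3)$ slack in stability is useless for small $n$, so the small cases must be treated separately, either by explicit optimisation over bipartitions or by an induction that removes a vertex of minimum degree $\delta$, using $(n-1)e(G) = \sum_v (\text{edges not through } v)$ to show $\delta \leq 3e(G)/n$, and then applying the pair-neighbourhood constraint to show the removed vertex's contribution fits within the $J_{n-1} \to J_n$ jump. Making this induction actually close for every $n \geq 3$ (including the awkward initial values where $|V_1|$ is only $1$ or $2$) is the delicate part that prevents a fully routine write-up and requires the genuine combinatorial work of the Füredi--Pikhurko--Simonovits paper.
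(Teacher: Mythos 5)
This theorem is cited from F\"uredi, Pikhurko and Simonovits; the paper at hand does not supply a proof of it, so there is no internal argument to compare against. Your lower bound is correct and cleanly argued: you observe that any copy of $F_{3,2}$ would place the pair $\{4,5\}$ in three edges, and check that in each of the three cases (both images in $V_0$, one in each class, both in $V_1$) the edge $\phi(\{1,2,3\})$ cannot respect the $(2,1)$-pattern. You correctly identify the structural heart of the matter, namely that $F_{3,2}$-freeness is the condition that every pair has an independent neighbourhood.

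The upper bound, however, is a strategy outline with a genuine gap at the bootstrap step. Replacing the link of a single vertex $v$ by the link it would have in $J(V_0,V_1)$ does \emph{not} obviously preserve $F_{3,2}$-freeness: any new copy of $F_{3,2}$ must pass through $v$, but it mixes edges from $v$'s new ($J$-type) link with edges of $G\setminus v$, and at this stage $G\setminus v$ is merely $o(n^3)$-close to a $(2,1)$-colourable graph rather than equal to one. The ``local check'' you invoke only covers the case where all the other four vertices already carry their $J$-links; ruling out the mixed configurations is precisely the substantive cleaning argument in the FPS paper (as in kindred stability-to-exact proofs such as Pikhurko's for $\{K_4^{(3)},E_1\}$), and your sketch does not address it. Two further remarks: the FPS paper (2005) predates Razborov's flag algebras (2007), so their density bound is by direct combinatorial counting rather than an SDP certificate, though a flag-algebra proof of $\pi(F_{3,2})\le 4/9$ is indeed possible; and obtaining the exact value for \emph{all} $n\ge 3$ rather than only large $n$ is, as you say, the delicate part --- your proposed minimum-degree induction is not worked out and would need to handle the boundary cases where the optimising $k$ shifts by one, which is where the real work lies.
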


We do not have an extension of this result, however we do have two new examples. 
\begin{theorem}\label{4.9:thm}
The $3$-graphs $G_1$ and $G_2$, given below, are non-$(2,1)$-colourable and satisfy $\pi(G_1)=\pi(G_2)=4/9$,
\[
G_1=\{123,124,134,235,245,156\},\qquad G_2=\{123,124,135,345,146,256\}.\]
\end{theorem}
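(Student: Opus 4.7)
The plan is to establish matching bounds $\pi(G_1) = \pi(G_2) = 4/9$.

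For the lower bound I would show that $G_1$ and $G_2$ are both non-$(2,1)$-colourable; by the discussion preceding Theorem \ref{fps:thm} this yields $\pi(G_i) \geq 4/9$ via the construction $J_n$. Encoding a $(2,1)$-colouring by indicator variables $x_v \in \{0,1\}$ with $x_v = 1$ iff $v \in V_1$, each edge $\{a,b,c\}$ contributes the equation $x_a + x_b + x_c = 1$. For $G_2$ this is a small linear system: subtracting pairs of the equations coming from edges $\{1,2,3\}, \{1,2,4\}, \{1,3,5\}, \{1,4,6\}$ forces $x_3 = x_4$ and $x_2 = x_5 = x_6$; edge $\{3,4,5\}$ then gives $x_3 = 0$, $x_5 = 1$, whereupon edge $\{2,5,6\}$ yields $x_2 + x_5 + x_6 = 3 \neq 1$, a contradiction. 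For $G_1$ the vertices $\{1,2,3,4\}$ span a copy of $K_4^-$ with apex $1$ (the unique vertex of degree $3$); a short case check shows the only $(2,1)$-colouring of $K_4^-$ places the apex in $V_1$ and the remaining three vertices in $V_0$. Edge $\{2,3,5\}$ then forces $x_5 = 1$, and edge $\{1,5,6\}$ has two vertices already in $V_1$, violating the $(V_0,V_0,V_1)$ pattern.

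For the upper bound I would invoke Razborov's flag algebra framework to prove $\pi(G_i) \leq 4/9$. The central task is to produce and verify a semidefinite programming certificate: a non-negative combination of squares in the flag algebra over $G_i$-free 3-graphs whose expansion shows that the edge density is at most $4/9 + o(1)$. To keep the SDP tractable I would apply Corollary \ref{sat:cor}(ii) repeatedly: for any 3-graph $G$ with $G_i \leq G$ we have $\pi(G_i) = \pi(G_i \cup G)$, so I can enlarge the forbidden family with small 3-graphs that the extremal configuration $J_n$ already avoids, pruning the flag algebra and shrinking the SDP. The candidate solution is then rounded to exact rationals and certified as positive semidefinite using only integer arithmetic, exactly as in Section \ref{comp:sec}.

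The main obstacle is the upper bound: identifying the right flag size and auxiliary family $\mathcal{G}$ so that the SDP admits a solution that rounds to exact rationals, and verifying positive semidefiniteness over $\mathbb{Q}$ without any floating-point computations. The non-$(2,1)$-colourability checks for the lower bound are short and routine by comparison.
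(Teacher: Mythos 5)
Your proposal is correct and follows the same two-part strategy as the paper: the lower bound comes from showing non-$(2,1)$-colourability (so $J_n$ supplies $\pi(G_i) \geq 4/9$), and the upper bound is a flag-algebra SDP certificate, pruned using Corollary~\ref{sat:cor}(ii) and verified over $\mathbb{Q}$ exactly as in Section~\ref{comp:sec}. The paper treats the non-$(2,1)$-colourability checks as routine and omits them; your explicit linear-system argument for $G_2$ and the $K_4^-$-apex argument for $G_1$ are both correct and make that step self-contained.
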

We note that the three examples of $3$-graphs with Tur\'an density $4/9$: $F_{3,2}$, $G_1$ and $G_2$ are all incomparable under blow-ups. See Section \ref{comp:sec} for discussion. Detailed computational proofs can be found in the appendix files \texttt{4-9-01-prf.txt} and \texttt{4-9-02-prf.txt}.
\subsection{Density 5/9}\label{5.9:sec}
\begin{figure}[ht]
\begin{center}
\includegraphics[height=4cm]{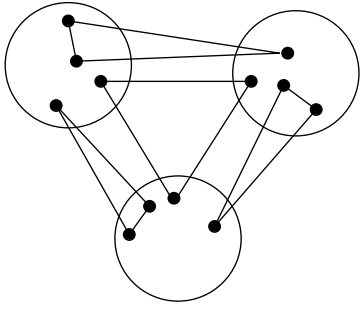}
\caption{Tur\'an's construction: $T_n$.}\label{t3:fig}
\end{center}
\end{figure}
One obvious sequence of $3$-graphs with asymptotic density $5/9$ is given by taking the balanced blow-ups of $K_6^{(3)}$, the complete $3$-graph of order $6$. If $n$ is a multiple of six then $K_6^{(3)}(n/6)$ has $20(n/6)^3$ edges. This construction is extremal (at least asymptotically) for the $3$-graph $H_7^3$ (see Theorem \ref{mp:thm}), and so $\pi(H_7^3)=5/9$. 

Another sequence of $3$-graphs with asymptotic density $5/9$ was first introduced by Tur\'an. Given a tripartition $[n]=V_0\cup V_1\cup V_2$ define the $3$-graph $T(V_0,V_1,V_2)$ to have as edges all triples meeting each $V_i$ exactly once together with those triples containing two vertices from $V_i$ and one from $V_{i+1}$ (where subscripts are understood modulo 3). If the tripartition is balanced then we denote this $3$-graph by $T_n$ (again a simple calculation shows that $T_n$ has the maximum number of edges of all such 3-graphs).

Tur\'an conjectured that $\tr{ex}(K_4^{(3)},n)=e(T_n)$ and hence $\pi(K_4^{(3)})=5/9$. This conjecture is still far from resolved and we will return to it in Section \ref{turan:sec}. For now it is sufficient to note that previously there were no known examples of single $3$-graphs $F$ satisfying $\pi(F)=5/9$, with the lower bound provided by $T_n$. (Since $H_t^3$ is $(2,1)$-colourable for any $t$ we have $H_t^3\subseteq T_n$ for $n$ sufficiently large. In particular $H_7^3\subseteq T_n$.)

\begin{theorem}\label{5.9:thm}
Each $3$-graph $F$ in the list below satisfies $\pi(F)=5/9$ and $T_n$ is $F$-free. These $3$-graphs are all incomparable with respect to blow-ups.
\begin{align*}
&\{123, 124, 134, 125, 245, 136, 346, 156\},\\
&\{123, 124, 134, 125, 135, 245, 345, 236, 456\},\\
&\{123, 124, 134, 125, 135, 245, 126, 236, 146\},\\
&\{123, 124, 134, 125, 135, 345, 126, 236, 246\},\\
&\{123, 124, 134, 125, 235, 345, 126, 246, 156\},\\
&\{123, 124, 134, 125, 235, 136, 346, 156, 356\},\\
&\{123, 124, 134, 125, 135, 245, 126, 136, 346, 456\},\\
&\{123, 124, 134, 125, 135, 345, 126, 236, 146, 156\},\\
&\{123, 124, 134, 125, 135, 245, 126, 236, 346, 356\},\\
&\{123, 124, 134, 125, 135, 345, 126, 236, 346, 356\},\\
&\{123, 124, 134, 125, 135, 146, 246, 156, 256, 456\},\\
&\{123, 124, 134, 125, 135, 146, 246, 156, 356, 456\}.
\end{align*}
\end{theorem}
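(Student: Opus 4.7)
The plan splits along the three assertions of the theorem. For the lower bound, I would verify directly that $T_n$ is $F$-free for each listed $F$. Recall that $T_n$ has a (balanced) tripartition $V_0 \cup V_1 \cup V_2$, and its edges are exactly the transversal triples (one vertex in each $V_i$) together with the twisted triples (two vertices in some $V_i$, one in $V_{i+1}$, indices mod $3$). An embedding of $F$ into $T_n$ therefore corresponds to a colouring $c\colon V(F) \to \{0,1,2\}$ such that for every edge $\{x,y,z\} \in F$, the multiset $\{c(x),c(y),c(z)\}$ is either $\{0,1,2\}$ or $\{i,i,i+1\}$ for some $i$. Since each $F$ in the list has at most $6$ vertices, this is a finite CSP with at most $3^6 = 729$ candidate colourings to rule out, easily handled computationally or by hand once one spots the forcing sub-configurations (for instance, the $K_4^-$-like patterns $\{123,124,134\}$ that appear at the start of every listed $F$ already severely restrict $c$). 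Once $T_n$ is shown $F$-free, we deduce $\pi(F) \geq \lim_n e(T_n)/\binom{n}{3} = 5/9$.

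For the upper bound $\pi(F) \leq 5/9$, which is the heart of the theorem, I would apply Razborov's flag-algebra framework. For each $F$, one sets up a semi-definite program at some level $N$, based on the $F$-free $3$-graphs on $N$ vertices together with a chosen set of types and typed flags, and searches for positive-semi-definite matrices $Q_\sigma$ whose associated flag-algebra inequality certifies $\pi(F) \leq 5/9$. The floating-point SDP solution must then be rounded to a rational PSD certificate, yielding the integer-arithmetic-verifiable proof promised by the paper. The main obstacle is that the bound $5/9$ is \emph{tight}: the construction $T_n$ matches it, so the SDP is saturated, and rounding to exactly $5/9$ while preserving PSD-ness requires first identifying the extremal flags supported by $T_n$ and forcing the corresponding dual variables to annihilate them before rounding. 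This is the notoriously delicate step in rigorous flag-algebra proofs; but once it succeeds for one $F$ in the list, adapting it to the other eleven should be largely systematic, since all twelve share the same extremal construction $T_n$ and hence the same ``zero-set'' on which the rounded dual must vanish.

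Finally, for pairwise incomparability under blow-ups, for each ordered pair $(F_i,F_j)$ with $i \neq j$ one checks whether $F_i \leq F_j$ by enumerating maps $c\colon V(F_i) \to V(F_j)$ and verifying, for every edge $e \in F_i$, that $c(e)$ either forms an edge of $F_j$ or has a repeated colour (which is automatically allowed in any blow-up). Since $|V(F_i)|,|V(F_j)| \leq 6$, this is a trivial finite check over the $12 \cdot 11 = 132$ ordered pairs, and exhibiting a single ``bad'' edge in each direction suffices. Thus the conceptual and practical difficulty of the proof lies essentially entirely in part two --- the tight flag-algebra upper bound --- while parts one and three are routine combinatorial verifications.
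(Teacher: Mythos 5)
Your three-part decomposition (verify $T_n$ is $F$-free; flag-algebra SDP for the upper bound; finite enumeration for incomparability) matches the overall structure of the paper's argument, but there is one genuine error and one omission.

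The error is in your criterion for $F_i \leq F_j$. In a blow-up $F_j(t)$ every edge has its three vertices in three \emph{distinct} classes corresponding to an edge of $F_j$; there are no edges with two vertices inside the same class. Consequently a map $c\colon V(F_i)\to V(F_j)$ witnesses $F_i\leq F_j$ precisely when, for every edge $e\in F_i$, the three vertices of $e$ receive three \emph{distinct} colours whose image set is an edge of $F_j$. A repeated colour within an edge is fatal, not ``automatically allowed'': the image triple would then lie inside at most two classes and hence be a non-edge of the blow-up. Under the criterion as you stated it the constant map $c\equiv v$ satisfies the condition vacuously for every pair, so your enumeration would (incorrectly) report that every $F_i\leq F_j$ and the incomparability claim could never be certified. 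With the condition corrected, the small finite search over maps between $6$-vertex sets that you propose is exactly what is needed.

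The omission concerns the upper bound. The paper does not run the SDP directly on the class of $F$-free $3$-graphs; instead, following the pattern flagged explicitly for this theorem, it uses Corollary~\ref{sat:cor}(ii) to replace the single-graph problem $\pi(F)$ by the equivalent finite-family problem $\pi(\{F\}\cup\mathcal{G})$, where $\mathcal{G}$ consists of small $3$-graphs $G$ with $F\leq G$. Running the flag-algebra SDP on the $(\{F\}\cup\mathcal{G})$-free $6$-vertex configurations cuts the problem size dramatically and is what makes the exact rounding feasible. Your description of the delicate step --- identifying the flags supported by $T_n$ and forcing the dual to annihilate them before converting the numerical solution to an exact rational PSD certificate --- is precisely the strategy the paper describes, so once you add this supersaturation preprocessing and fix the blow-up containment criterion, the plan is sound.
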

See Section \ref{comp:sec} for discussion of our proof methods, again we made extensive use of Corollary \ref{sat:cor} (ii). Detailed computational proofs can be found in the appendix files \texttt{5-9-01-prf.txt} to \texttt{5-9-12-prf.txt}.
\subsection{Density 3/4}
\begin{figure}[ht]
\begin{center}
\includegraphics[height=3cm]{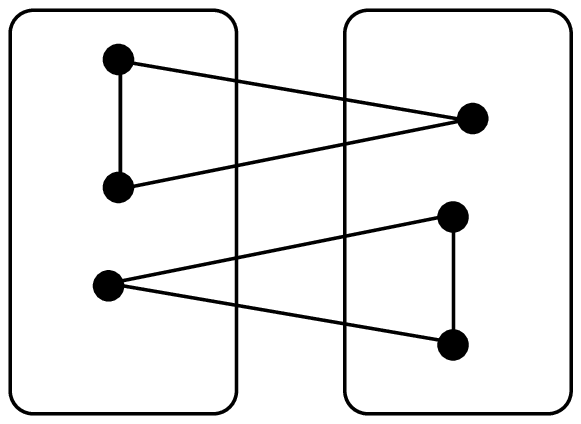}
\caption{A bipartite $3$-graph.}\label{bn:fig}
\end{center}
\end{figure}
We say that a $3$-graph is \emph{bipartite} if there is a partition of its vertex set into two classes, neither of which contains an edge. Given a bipartition $[n]=V_0\cup V_1$ let $B(V_0,V_1)$ be the \emph{complete bipartite} 3-graph with vertex classes $V_0$ and $V_1$, i.e.~its edges are all triples meeting both $V_0$ and $V_1$. If the bipartition is balanced then we denote this 3-graph by $B_n$. Clearly $B_n$ is a bipartite $3$-graph of order $n$ with the maximum number of edges. Moreover $\lim_{n\to \infty}e(B_n)/\binom{n}{3}=3/4$ and so any non-bipartite $3$-graph $F$ satisfies $\pi(F)\geq 3/4$.

The first example of a $3$-graph with Tur\'an density $3/4$ was given by de Caen and F\"uredi \cite{CF}, proving a conjecture of S\'os. The \emph{Fano plane} is the $3$-graph $PG(2,2)=\{123, 145, 356, 167, 257, 347, 246\}$.
\begin{theorem}[de Caen and F\"uredi \cite{CF}]\label{cf:thm}
The Fano Plane $PG(2,2)$ satisfies $\pi(PG(2,2))=3/4$.
\end{theorem}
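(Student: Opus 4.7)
The plan splits cleanly into lower and upper bounds.

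For the lower bound I would simply invoke the (classical) fact that the Fano plane has chromatic number $3$ as a $3$-graph: any $2$-colouring of its seven points leaves a monochromatic line. Thus $PG(2,2)$ is not bipartite in the sense of this section, so the construction $B_n$ is Fano-free, and since $\lim_{n\to \infty} e(B_n)/\binom{n}{3}=3/4$ we obtain $\pi(PG(2,2))\geq 3/4$.

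For the upper bound I would follow the link-graph strategy of de Caen and F\"uredi. Given a Fano-free $H$ on $n$ vertices with $e(H)\geq (3/4+\delta)\binom{n}{3}$, I would analyse the pair codegrees $d(uv)=|\{w:uvw\in E(H)\}|$ and the vertex link graphs $L_v=\{xy:vxy\in E(H)\}$. The key structural observation is that the Fano plane is a \emph{linear space}: every pair of points lies on a unique line. Consequently, once one has located six vertices spanning six of the seven Fano lines, the seventh vertex is uniquely constrained on three disjoint pairs, and the Fano plane appears as soon as its codegree against each of those three pairs is positive. The embedding step is then a double-counting argument which combines the Kruskal--Katona shadow inequality applied to the link graphs $L_v$ with a supersaturation estimate turning the global density hypothesis into a lower bound on the number of near-Fano configurations.

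The main obstacle I expect is to make the local constraints tight enough to pin down $3/4$ exactly rather than some weaker constant: any slack in the link analysis tends to push the bound to $4/5$ or worse. De Caen and F\"uredi overcome this by coupling the shadow argument with a stability step, showing that an almost-extremal $H$ must be almost bipartite and then propagating the partition one vertex at a time. A modern alternative, much more in keeping with the methods of the rest of this paper, would be to run Razborov's flag algebra framework on $3$-graphs of order $7$ and search for a Cauchy--Schwarz / semidefinite programming certificate for the bound $\pi(PG(2,2))\leq 3/4$; such certificates are known and verifiable in exact rational arithmetic, and would fit the computational framework used elsewhere in the paper.
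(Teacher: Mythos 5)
The paper cites this as a known theorem of de Caen and F\"uredi \cite{CF} and gives no proof of its own, so there is nothing in the paper to compare against; I am evaluating your reconstruction on its own merits. Your lower bound is fine: the Fano plane is indeed not $2$-colourable as a $3$-graph (any $2$-colouring of $PG(2,2)$ yields a monochromatic line), so $B_n$ is Fano-free and $\pi(PG(2,2))\geq 3/4$.

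Your upper bound, however, misdescribes the de Caen--F\"uredi argument in two ways. First, their proof does not invoke the Kruskal--Katona shadow inequality; it is an elementary double-counting argument built around a short graph-theoretic lemma about link graphs, exploiting the fact that deleting a vertex of the Fano plane leaves a perfect matching on $6$ vertices (the three restricted lines through the deleted point) plus four transversal triples. Your structural observation about a linear space and the ``seventh vertex constrained on three disjoint pairs'' is close in spirit to this, but the Kruskal--Katona step you insert is not part of their argument. Second, and more seriously, de Caen and F\"uredi do \emph{not} ``couple the shadow argument with a stability step'' nor do they propagate a bipartition vertex by vertex: their 2000 paper proves only the asymptotic density bound directly. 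Stability for the Fano plane, and the exact Tur\'an number for large $n$, were established later and independently by Keevash--Sudakov and by F\"uredi--Simonovits. Attributing that machinery to de Caen--F\"uredi conflates two distinct pieces of the literature, and if one tried to run your sketch literally, the ``stability step'' would be an unproved assumption rather than a tool. Your flag-algebra alternative is, by contrast, a legitimate and verifiable route to $\pi(PG(2,2))\leq 3/4$, and is much more in the spirit of the present paper, though it requires working with $7$-vertex $3$-graphs, a noticeably heavier computation than the $6$-vertex problems treated here.
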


Their method was extended by Mubayi and R\"odl \cite{MR} to show that a number of other $3$-graphs have Tur\'an density $3/4$.

For $p,q\geq 1$ let  $F_{p,q}$ be the 3-graph with vertex set $[p+q]$ and edges $\binom{[p]}{3}\cup \{xyz:x\in [p],y,z\in [p+q]-[p]\}$.  Let $F'_{3,3}$ be a copy of $F_{3,3}$ with  two additional vertices, 7, 8, and four additional edges 178, 278, 478, 578. Let $F''_{3,3}$ be obtained from $F'_{3,3}$ by adding two new vertices, 9,$a$, and three edges, $19a$, $49a$, $79a$. Let $F^-_{4,3}$ be the 3-graph obtained from $F_{4,3}$ by deleting the edge 156. Let
$F'^{-}_{4, 3}$ be obtained from $F^-_{4,3}$ by adding two vertices 8, 9, and adding three edges 289, 389, 589.

\begin{theorem}[Mubayi and R\"odl \cite{MR}]\label{mr:thm}
Let $\mc{S} = \{F_{3,3}, F_{3,3}',F_{3,3}'', F_{4, 3}^{-}, F'^{-}_{4, 3}\}$. If $A, B \in \mc{S}$ and $A\subseteq F \subseteq B$ then $\pi(F) = 3/4$.
\end{theorem}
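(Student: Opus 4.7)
The proof splits into matching lower and upper bounds. For the lower bound it suffices to verify that each member of $\mc{S}$ is non-bipartite: then since $A\subseteq F$, $F$ is non-bipartite, so the balanced complete bipartite 3-graph $B_n$ is $F$-free and $\pi(F)\ge \lim_{n\to\infty} e(B_n)/\binom{n}{3}=3/4$. Non-bipartiteness of $F_{3,3}$ follows from a short case check: in any 2-coloring, some two vertices among $\{4,5,6\}$ share a color, say $\{4,5\}$; then each edge $x45$ with $x\in\{1,2,3\}$ forces $x$ into the opposite class, making $\{1,2,3\}$ monochromatic and leaving the edge $123$ inside one class. A similar argument handles $F_{4,3}^{-}$, and the other members of $\mc{S}$ contain one of these as a subgraph.

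For the upper bound, monotonicity of $\pi$ under subgraphs reduces the task to showing $\pi(B)\le 3/4$ for the maximal elements of $\mc{S}$ under inclusion, namely $F_{3,3}''$ and $F'^{-}_{4,3}$. My plan is to extend the link-graph approach of de Caen and F\"uredi used in Theorem \ref{cf:thm}. Assume $H$ is a $B$-free 3-graph on $n$ vertices with $e(H) > (3/4+\varepsilon)\binom{n}{3}$. For each $v$ let $L_v$ denote the link graph of $v$ on $V(H)\setminus\{v\}$; since $\sum_v e(L_v) = 3 e(H)$, a positive fraction of vertices have $e(L_v)\ge (3/4+\varepsilon/2)\binom{n-1}{2}$. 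A supersaturation argument applied to such a dense link, together with the overall edge density, produces many copies of $F_{3,3}$ (respectively $F_{4,3}^{-}$) in $H$.

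The crux is the extension phase. To promote a copy of $F_{3,3}$ on $\{1,\dots,6\}$ to a copy of $F_{3,3}''$ one must find four additional vertices $7,8,9,a$ whose joint adjacency into $\{1,\dots,6\}$ realizes the prescribed extra edges $178, 278, 478, 578$ and $19a, 49a, 79a$; the case of $F'^{-}_{4,3}$ is analogous with pendants $8,9$ attached by $289, 389, 589$. This reduces to controlling several pairwise codegrees $d(u,v)=|\{w:uvw\in H\}|$ simultaneously; their average is $\sim 3n/4$, but joint satisfaction of the required pattern cannot generally be obtained by a single greedy step. I expect the main obstacle to be coordinating these multi-vertex extensions: one needs to use supersaturation to select a copy of $F_{3,3}$ (or $F_{4,3}^{-}$) for which the relevant codegrees are all simultaneously large, followed by a final averaging to locate the pendant vertices. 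Resolving this (as Mubayi and R\"odl do) yields $\pi(B)\le 3/4$ and closes the argument.
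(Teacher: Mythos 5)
The paper does not prove Theorem~\ref{mr:thm}; it is a result cited from Mubayi and R\"odl \cite{MR}, so there is no in-paper proof to compare against. Assessing your attempt on its own merits: the lower bound is complete and correct. You verify that $F_{3,3}$ and $F_{4,3}^-$ are non-bipartite by forcing a monochromatic triple from a repeated colour among the tail vertices, note that every member of $\mc{S}$ contains one of these two, and conclude that $A\subseteq F$ makes $F$ non-bipartite, so $B_n$ is $F$-free and $\pi(F)\ge\lim_n e(B_n)/\binom{n}{3}=3/4$. The reduction to the maximal elements $F_{3,3}''$ and $F'^-_{4,3}$ via monotonicity is also sound (you should, strictly, check that these really are the maximal elements, which they are: $F'^-_{4,3}$ has more edges than $F''_{3,3}$ and $F''_{3,3}$ has more vertices than $F'^-_{4,3}$).

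The upper bound, however, is a plan rather than a proof, and you say so yourself. Two concrete gaps remain. First, the step ``a supersaturation argument applied to such a dense link, together with the overall edge density, produces many copies of $F_{3,3}$'' is circular if read in the usual way: supersaturation turns a bound $\pi(F_{3,3})\le 3/4$ into a counting statement, but $\pi(F_{3,3})\le 3/4$ is exactly what is in question. One must actually produce a first copy of $F_{3,3}$ (resp.\ $F_{4,3}^-$) directly from the density hypothesis; this is where the de Caen--F\"uredi machinery (analysing the structure of the complement of a link graph) does its real work, and none of that is reproduced here. Second, the extension to the pendant structures of $F_{3,3}''$ and $F'^-_{4,3}$ --- the coordinated selection of several pairs of simultaneously high codegree --- is precisely the technical heart of Mubayi and R\"odl's argument, and you explicitly defer it (``Resolving this (as Mubayi and R\"odl do) yields\ldots''). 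Your plan correctly identifies the method family (the paper itself notes that Mubayi and R\"odl extended the de Caen--F\"uredi link-graph approach), but as written the upper-bound direction is a pointer to the literature rather than an argument.
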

We have the following new results.

\begin{theorem}\label{3.4:thm}
Each $3$-graph $F$ in the list below satisfies $\pi(F)=3/4$ and $B_n$ is $F$-free. All of these $3$-graphs are incomparable with respect to blow-ups.
\begin{align*}
&\{123, 124, 134, 234, 125, 135, 235, 145, 126, 136, 236, 146, 256, 356\},\\
&\{123, 124, 134, 234, 125, 135, 235, 145, 245, 126, 136, 236, 146, 356, 456\},\\
&\{123, 124, 134, 234, 125, 135, 235, 145, 245, 126, 136, 146, 346, 256, 356, 456\}.
\end{align*}
\end{theorem}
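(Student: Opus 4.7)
The plan is to establish, for each 3-graph $F$ in the list, both the lower bound $\pi(F)\geq 3/4$ and the matching upper bound $\pi(F)\leq 3/4$, and then to verify the pairwise incomparability under blow-ups. For the lower bound, it suffices (given $e(B_n)/\binom{n}{3}\to 3/4$) to show that each $F$ is non-bipartite, so that $B_n$ is $F$-free. In every listed example the edge set already contains $\{123,124,134,234\}=K_4^{(3)}$, so any 2-colouring of $\{1,2,3,4\}$ is forced to put three vertices into one class and hence produces a monochromatic edge; this makes the non-bipartiteness (and hence the $F$-freeness of $B_n$) entirely transparent.

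For the upper bound, I would apply Razborov's flag algebra framework, as in the earlier subsections, augmented by Corollary \ref{sat:cor}(ii). Concretely, for each $F$ I would identify a family $\mathcal{G}$ of 3-graphs from the Mubayi--R\"odl family $\mathcal{S}$ of Theorem \ref{mr:thm} (and possibly blow-ups of the Fano plane via Theorem \ref{cf:thm}) such that $F\leq G$ for every $G\in\mathcal{G}$, and then invoke Corollary \ref{sat:cor}(ii) to obtain $\pi(F)=\pi(\{F\}\cup\mathcal{G})$. Saturating the forbidden family in this way enormously cuts down the number of surviving flags, turning the underlying semidefinite program into one whose optimum is exactly $3/4$ and which is small enough to admit an explicit positive semidefinite certificate. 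Following the scheme outlined in Section \ref{comp:sec}, this certificate is then rationalised so that the final verification uses only integer arithmetic over $\mathbb{Q}$, making $\pi(F)\leq 3/4$ a genuine (non-numerical) bound.

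The incomparability claim is a finite combinatorial check: for each ordered pair $(F_i,F_j)$ from the list I would verify $F_i\not\leq F_j$. Since $K_4^{(3)}$ is covering, Lemma \ref{cover:lem} forces any embedding of $F_i$ into a blow-up $F_j(t)$ to send the $K_4^{(3)}$ on $\{1,2,3,4\}$ injectively into $V(F_j)$; enumerating the (few) injections $\{1,2,3,4\}\hookrightarrow V(F_j)$ whose image spans a $K_4^{(3)}$ in $F_j$ and then checking, for each, which assignments of the remaining vertices of $F_i$ to classes of $F_j(t)$ respect the edges, reduces the question to a short case analysis that can be automated and run over all ordered pairs.

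The main obstacle is the flag algebra upper bound. The three 3-graphs have $14$, $15$, and $16$ edges respectively, so without the saturation trick the SDPs at useful flag orders would be impractically large, and even with saturation one must choose $\mathcal{G}$ carefully: too sparse a $\mathcal{G}$ leaves the relaxation infeasible at reasonable flag orders, while too rich a $\mathcal{G}$ reintroduces computational blow-up and makes the rationalisation step harder. Picking the right $\mathcal{G}$ in each case, and then performing a controlled rational rounding of the Cholesky factors so that the residual stays tight enough to certify the bound $3/4$ exactly, is where the real work of the proof lies.
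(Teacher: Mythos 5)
Your lower bound is fine: each listed $F$ contains the edges $123,124,134,234=K_4^{(3)}$, so $F$ is non-bipartite, hence $B_n$ is $F$-free and $\pi(F)\geq 3/4$.

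The upper bound strategy has a concrete gap. You propose to saturate via Corollary~\ref{sat:cor}(ii) using a family $\mathcal{G}\subseteq\mathcal{S}\cup\{PG(2,2)\}$ with $F\leq G$ for all $G\in\mathcal{G}$. But the paper explicitly notes (in the remark following Theorem~\ref{3.4:thm}) that all three listed $3$-graphs are \emph{covering}, and by Lemma~\ref{cover:lem} a covering $F$ satisfies $F\leq G$ iff $F\subseteq G$; it also notes that none of the three graphs are subgraphs of any $3$-graph appearing in Theorems~\ref{cf:thm} or~\ref{mr:thm}. So your candidate $\mathcal{G}$ is necessarily empty, and the proposed saturation step collapses. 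Indeed, the proof of Theorem~\ref{stab:thm} records that the families \texttt{3-4-01}, \texttt{3-4-02}, \texttt{3-4-03} were \emph{not} among those handled via Corollary~\ref{sat:cor} (those fall under complication (1), which excludes the $3/4$ cases); the paper determines $\pi(F)$ directly by a flag-algebra SDP on $6$-vertex $F$-free $3$-graphs, with an exact rational certificate, and the size is manageable precisely because a graph containing $K_4^{(3)}$ and $14$--$16$ edges on six vertices is already quite restrictive. Your intuition that one must choose $\mathcal{G}$ carefully to control the SDP size is sound in general, but here the direct computation is the route and your specific choice of $\mathcal{G}$ cannot be realised.

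On incomparability: your argument via the covering subgraph $K_4^{(3)}$ on $\{1,2,3,4\}$ works, but it is more roundabout than necessary. Since each $F_i$ is itself covering, Lemma~\ref{cover:lem} applied to $F_i$ (not merely to $K_4^{(3)}$) immediately reduces $F_i\leq F_j$ to the ordinary subgraph relation $F_i\subseteq F_j$ on six labelled vertices, a shorter finite check that in particular is already ruled out by edge counts whenever $e(F_i)>e(F_j)$.
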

We remark that these are genuinely new results: the three $3$-graphs in Theorem \ref{3.4:thm} are all covering and are not contained in any of the $3$-graphs listed in Theorems \ref{cf:thm} and \ref{mr:thm} (thus by Lemma \ref{cover:lem} we cannot deduce their Tur\'an densities via blow-ups from the earlier results). 

%
%

See Section \ref{comp:sec} for discussion of our proof methods. Detailed computational proofs can be found in the appendix files \texttt{3-4-01-prf.txt} to \texttt{3-4-03-prf.txt}.

We note that for all of our new results (Theorems \ref{2.9:thm}, \ref{4.9:thm}, \ref{5.9:thm}, \ref{3.4:thm}) we have corresponding stability theorems (we defer a discussion of stability and exact Tur\'an numbers to Section \ref{stab:sec}). 

\section{New Tur\'an densities for finite families}\label{finite:sec}
In the previous section we focused almost exclusively on Tur\'an densities of individual $3$-graphs. We now turn to the question of which values from $[0,1)$ can occur as Tur\'an densities of families of $r$-graphs (Question \ref{qu:2}). We will be interested in the size of the families in question and so require the following definitions. 

For $r\geq 3$ and $t\geq 1$ integers, define \[
\Pi^{(r)}_t=\{\pi(\F):\tr{$\F$ is a family of $r$-graphs and $|\F|\leq t$}\},\] \[
\Pi^{(r)}_{\infty}=\{\pi(\F): \F\tr{ is a family $\F$ of $r$-graphs}\}\]
and \[
\Pi^{(r)}_{\tr{fin}}=\{\pi(\F):\tr{$\F$ is a finite family of $r$-graphs}\}.\]
Obviously the following containments hold:
\[
\Pi^{(r)}_1\subseteq \Pi^{(r)}_2\subseteq \cdots \subseteq \Pi^{(r)}_{\tr{fin}}\subseteq \Pi^{(r)}_{\infty}.\]
To the best of our knowledge it is not known whether any of these containments are strict.

The two general results we have regarding these sets of densities are Erd\H os's result for $r$-partite $r$-graphs (Theorem \ref{erd:thm}) and Mubayi and Pikhurko's result for $H_t^r$ (Theorem \ref{mp:thm}). Putting these together yields.
\begin{corollary}\label{erd2:cor}
For all $t\geq r\geq 2$
\[
[0,\frac{r!}{r^r})\cap \Pi_1^{(r)}=[0,\frac{r!}{r^r})\cap \Pi_\infty^{(r)}=\{0\}\]
and
\[
\frac{r!}{t^r}\binom{t}{r}\in \Pi_1^{(r)}.\]
\end{corollary}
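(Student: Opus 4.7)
The plan is to combine Corollary \ref{erd:cor} with Theorem \ref{mp:thm}; no new ideas should be required beyond carefully assembling what is already on the table. For the first equality, I would first note that Corollary \ref{erd:cor} implies that every family $\F$ of $r$-graphs satisfies either $\pi(\F)=0$ or $\pi(\F)\geq r!/r^r$, so $(0,r!/r^r)\cap\Pi_\infty^{(r)}=\emptyset$, and a fortiori $(0,r!/r^r)\cap\Pi_1^{(r)}=\emptyset$. It then remains to produce a single-member family witnessing $0$: taking $\F=\{K^{(r)}(1)\}$ (i.e., a single edge) gives a one-element $r$-partite family, and $\pi(\F)=0$ follows from Theorem \ref{erd:thm}. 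Combined with the inclusions $\Pi_1^{(r)}\subseteq\Pi_\infty^{(r)}$ noted immediately before the corollary, both displayed equalities drop out.

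For the second statement, I would simply observe that $\{H_{t+1}^r\}$ is a one-element family of $r$-graphs, so Theorem \ref{mp:thm} gives
\[
\pi(\{H_{t+1}^r\})=\frac{r!}{t^r}\binom{t}{r}\in\Pi_1^{(r)}
\]
whenever $3\leq r\leq t$. The only mild obstacle is the boundary case $r=2$, which lies outside the hypothesis of Theorem \ref{mp:thm}; here one checks that $r!/t^r\binom{t}{r}=(t-1)/t=1-1/t$, and Theorem \ref{es:thm} applied to $\F=\{K_{t+1}\}$ (a single graph, since $\chi(K_{t+1})=t+1$) gives $\pi(\{K_{t+1}\})=1-1/t$, so this value also belongs to $\Pi_1^{(r)}$. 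This handles every $t\geq r\geq 2$, completing the argument.
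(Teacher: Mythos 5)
Your proposal is correct and takes essentially the same route the paper leaves implicit (the paper simply says the corollary follows by "putting together" Theorem \ref{erd:thm} (via Corollary \ref{erd:cor}) and Theorem \ref{mp:thm}). One small point worth flagging: you are right to separate out the $r=2$ case, since Theorem \ref{mp:thm} is only stated for $3\leq r\leq t$ — the paper does not address this explicitly, but your appeal to Theorem \ref{es:thm} with $\F=\{K_{t+1}\}$, giving $\pi(K_{t+1})=1-1/t=\frac{2!}{t^2}\binom{t}{2}$, is exactly what is needed to cover $r=2$ and makes the argument complete for all $t\geq r\geq 2$.
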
 

A  useful tool, when searching for new Tur\'an densities, is the \emph{Lagrangian} of an $r$-graph. Let  $F$ be an $r$-graph with vertex set $[n]=\{1,2,\ldots,n\}$. Define \[ \Delta_n=\{(x_1,\ldots,x_n)\in \mathbb{R}^n:
\sum_{i=1}^n x_i=1,x_i\geq 0\}.\] For $x\in \Delta_n$ let
\[
\lambda(F,x)=r!\sum_{\{i_1,i_2,\ldots,i_r\}\in F}\prod_{j=1}^rx_{i_j}.\]
The \emph{Lagrangian} of $F$ is $\lambda(F)=\max_{x\in \Delta_n}\lambda(F,x)$.

The Lagrangian of an $r$-graph is closely related to certain blow-ups of $F$: it tells us how dense the densest blow-up of a subgraph of $F$ can be. We introduce a new set of densities:
\[
\Lambda^{(r)}=\{\lambda(F): F\tr{ is an $r$-graph}\}.\]

Brown and Simonovits showed that the following containments hold. (Note that for $A\subseteq \mathbb{R}$ we denote the closure of $A$ by $\overline{A}$.)
\begin{theorem}[Brown and Simonovits \cite{BS}]\label{bs2:thm}
If $r \geq 2$ then
\[
\Lambda^{(r)}\subseteq \Pi^{(r)}_{\infty}=\overline{\Pi}_{\tr{fin}}^{(r)}=\overline{\Lambda}^{(r)}.\]
\end{theorem}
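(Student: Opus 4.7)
The plan is to establish the three relations $\Lambda^{(r)}\subseteq \Pi_\infty^{(r)}$, $\Pi_\infty^{(r)}=\overline{\Pi}_{\tr{fin}}^{(r)}$, and $\overline{\Pi}_{\tr{fin}}^{(r)}=\overline{\Lambda^{(r)}}$ separately, and then stitch them together.

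For $\Lambda^{(r)}\subseteq \Pi_\infty^{(r)}$, to each $r$-graph $F$ I would associate the (typically infinite) family $\F(F):=\{G:G\not\leq F\}$. An $r$-graph $H$ is $\F(F)$-free precisely when $H\leq F$, i.e.\ $H\subseteq F(\mathbf{t})$ for some $\mathbf{t}$. If such an $H$ on $n$ vertices places $h_i$ of its vertices in the $i$-th blow-up class of $F$, then writing $x=(h_i/n)$ and expanding $e(H)\leq\sum_{e\in F}\prod_{i\in e}h_i$ using the definition of $\lambda(F,\cdot)$ yields $e(H)/\binom{n}{r}\leq \lambda(F,x)+o(1)\leq \lambda(F)+o(1)$. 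Conversely, the weighted blow-ups $F(\lfloor x_1^{*}N\rfloor,\dots,\lfloor x_m^{*}N\rfloor)$ with $x^{*}$ optimising $\lambda(F)$ are $\F(F)$-free and have density tending to $\lambda(F)$; hence $\pi(\F(F))=\lambda(F)$.

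For $\Pi_\infty^{(r)}=\overline{\Pi}_{\tr{fin}}^{(r)}$, given any $\F$ I would truncate: $\F_k:=\{F\in\F:|V(F)|\leq k\}$. Each $\F_k$ is finite, $\pi(\F_k)\geq \pi(\F)$, and the convergence $\pi(\F_k)\to \pi(\F)$ follows from a compactness/diagonalisation argument: extremal $\F_k$-free graphs of increasing orders have a subsequential limit (e.g.\ a graphon) that avoids every $F\in\F$ and has density $\lim_k \pi(\F_k)$, giving $\pi(\F)\geq \lim_k \pi(\F_k)$. For the reverse inclusion $\overline{\Pi}_{\tr{fin}}^{(r)}\subseteq\Pi_\infty^{(r)}$ it suffices that $\Pi_\infty^{(r)}$ is closed, which follows from the same compactness argument applied to an arbitrary sequence $\pi(\F_n)\to\alpha$ by forming the union family and taking a limit of extremal constructions.

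For $\overline{\Pi}_{\tr{fin}}^{(r)}=\overline{\Lambda^{(r)}}$, the inclusion $\overline{\Lambda^{(r)}}\subseteq \overline{\Pi}_{\tr{fin}}^{(r)}$ follows by closing the chain $\Lambda^{(r)}\subseteq \Pi_\infty^{(r)}=\overline{\Pi}_{\tr{fin}}^{(r)}$ established above. The reverse reduces to showing $\pi(\F)\in\overline{\Lambda^{(r)}}$ for every finite $\F$: extremal $\F$-free graphs $H_n$ satisfy $\lambda(H_n)\geq r!\,e(H_n)/n^r\to \pi(\F)$ via uniform weights, so along a subsequence $\lambda(H_n)\to L\geq \pi(\F)$. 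The main obstacle is the matching upper bound $L\leq \pi(\F)$: since a blow-up of an $\F$-free graph need not be $\F$-free, the densest blow-up of $H_n$ cannot be used directly as an $\F$-free construction. I would address this by invoking Theorem \ref{bs1:thm} to replace $\F$ by a suitable $\F(\mathbf{t})$ without changing $\pi(\F)$, and then modifying $H_n$ to a nearby $\F$-free graph whose iterated blow-ups remain $\F$-free (for example by restricting to a ``blow-up closed'' subgraph) while preserving $\lambda(H_n)$ up to $o(1)$; the blow-ups then realise density arbitrarily close to $L$ as $\F$-free constructions, forcing $L\leq \pi(\F)$.
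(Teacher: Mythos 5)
The paper does not prove Theorem~\ref{bs2:thm}; it is taken from Brown and Simonovits \cite{BS}, with the only hint in the text being the parenthetical remark that $\F_F=\{G:\lambda(G)>\lambda(F)\}$ satisfies $\pi(\F_F)=\lambda(F)$, which addresses only the first inclusion. So there is no in-paper proof to line your attempt up against, and I will assess it on its own terms.

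Your first step is fine. The family $\F(F)=\{G:G\not\leq F\}$ does the job: $H$ is $\F(F)$-free iff $H\leq F$, and the density calculation you sketch is correct. Note that $\F(F)\supseteq\F_F$ (if $\lambda(G)>\lambda(F)$ then $G\not\leq F$), and both families have Tur\'an density $\lambda(F)$, so yours is a mild variant of the paper's. Your second step, $\Pi_\infty^{(r)}=\overline{\Pi}^{(r)}_{\tr{fin}}$, is also correct though heavier than necessary: the truncations $\F_k$ satisfy $\tr{ex}(k,\F_k)=\tr{ex}(k,\F)$ for every $k$ (members of $\F$ on more than $k$ vertices cannot occur in a $k$-vertex host), and by the Katona--Nemetz--Simonovits averaging argument $\pi(\F_k)\leq\tr{ex}(k,\F_k)/\binom{k}{r}=\tr{ex}(k,\F)/\binom{k}{r}\to\pi(\F)$, so $\pi(\F_k)\downarrow\pi(\F)$ with no need for graphon limits. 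For closedness of $\Pi_\infty^{(r)}$ your compactness sketch is plausible but would need filling in.

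The third step contains a genuine gap. You set $L=\lim\lambda(H_n)$ for extremal $\F$-free $H_n$ and try to prove $L\leq\pi(\F)$, but this inequality is simply false in general: for $r=2$ and $\F=\{C_4\}$ we have $\pi(\F)=0$, yet any nonempty $H_n$ already has $\lambda(H_n)\geq\lambda(K_2)=1/2$, so $L=1/2>0$. The obstruction is exactly the one you name---blow-ups of an $\F$-free graph need not be $\F$-free---but your proposed fix, ``restrict to a blow-up-closed subgraph while preserving $\lambda(H_n)$ up to $o(1)$,'' cannot work as stated: in the $C_4$ example the only subgraph all of whose blow-ups are $C_4$-free is the empty graph, whose Lagrangian is $0$, not $1/2$. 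The correct route (due to Brown and Simonovits, via supersaturation) is not to approximate $\lambda(H_n)$ but to show directly that $\pi(\F)=\sup\{\lambda(G):\text{every blow-up of }G\text{ is }\F\text{-free}\}$ for finite $\F$; that supremum is taken over a genuinely different and usually much sparser class of $G$ than the extremal $H_n$, and the Lagrangians involved can be much smaller than $\lambda(H_n)$. Without that replacement your argument only yields $\pi(\F)\leq L$, which is the wrong direction.
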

Thus in particular every Lagrangian of an $r$-graph is the Tur\'an density of a corresponding infinite family of $r$-graphs. (In fact it is easy to see that for any $r$-graph $F$, the family $\F_F=\{G:G\tr{ is an $r$-graph with }\lambda(G)> \lambda(F)\}$ satisfies $\pi(\F_F)=\lambda(F)$.)

For small $3$-graphs it is straightforward to calculate the Lagrangian directly. We will use this to give some new examples of Tur\'an densities of finite families of $3$-graphs. In particular we give the first examples of irrational Tur\'an densities for finite families, disproving the following conjecture of Chung and Graham \cite{CG}.
\begin{conjecture}[Chung and Graham \cite{CG} pg 95]\label{CG:conj}
If $\mathcal{F}$ is a finite family of $r$-graphs then $\pi(\F)$ is rational.
\end{conjecture}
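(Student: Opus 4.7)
Since the statement is a conjecture the paper aims to refute, the ``proof'' is really a construction of a counterexample: exhibit a finite family $\mathcal{F}$ of $r$-graphs for which $\pi(\mathcal{F})$ is irrational. The natural place to look for irrational values is the set $\Lambda^{(r)}$ of Lagrangians, since these can be computed explicitly on small $r$-graphs and have no rationality obligation. Brown and Simonovits (Theorem \ref{bs2:thm}) already give $\Lambda^{(r)} \subseteq \Pi_\infty^{(r)} = \overline{\Pi}_{\mathrm{fin}}^{(r)}$, but the closure is essential in that statement; the task is to upgrade ``limit of rationals in $\Pi_{\mathrm{fin}}^{(r)}$'' to ``attained by some finite family.''

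First I would search by hand among $r$-graphs on few vertices for an $F$ whose Lagrangian $\lambda(F)$ is irrational, lying in a fixed quadratic extension $\mathbb{Q}(\sqrt{d})$, and for which the optimiser $x \in \Delta_n$ has full support and is essentially unique. The associated construction is an iterated weighted blow-up of $F$: partition $[n]$ according to the optimal weights and recurse inside each class. This sequence is $F$-free (in fact, avoids any $r$-graph that is not contained in a blow-up of $F$), and its edge density tends to $\lambda(F)$, giving the lower bound automatically once the finite forbidden family $\mathcal{F}$ is chosen to consist only of $r$-graphs not contained in any blow-up of $F$.

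The upper bound $\pi(\mathcal{F}) \leq \lambda(F)$ is where the work lies, and here I would follow exactly the flag algebra strategy already used elsewhere in the paper. The key twist is that the positive semi-definite certificate cannot have rational entries: the eigenvalues of the SDP matrices, the coefficients in the flag algebra identities, and the final density bound must all live in the quadratic field $\mathbb{Q}(\sqrt{d})$ fixed by $\lambda(F)$. As noted in the abstract, this is entirely legitimate — all arithmetic is still exact integer arithmetic, performed in $\mathbb{Z}[\sqrt{d}]$ — but the numerical SDP solver must be coaxed into producing a solution that rounds to $\mathbb{Q}(\sqrt{d})$ rather than $\mathbb{Q}$, typically by fixing the target density to the correct algebraic value before solving.

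The principal obstacle is choosing the pair $(F,\mathcal{F})$ so that the flag algebra SDP closes precisely \emph{at} $\lambda(F)$. Too large a family $\mathcal{F}$ may inadvertently forbid a subgraph of some iterated blow-up of $F$, killing the lower bound; too small a family leaves room for denser configurations — in particular, for blow-ups of $r$-graphs $G$ with $\lambda(G) > \lambda(F)$ — and the SDP upper bound will then exceed $\lambda(F)$. Locating the correct balance is precisely what distinguishes the finite case from the easy infinite Brown--Simonovits case, and I would expect to find it via Corollary \ref{sat:cor} together with a computer-aided search: start from the infinite family $\{G : \lambda(G) > \lambda(F)\}$ and prune it aggressively, using containment in blow-ups of $F$ as the guiding criterion, until a finite sub-family is reached for which the flag algebra certificate over $\mathbb{Q}(\sqrt{d})$ still exists.
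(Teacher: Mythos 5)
Your overall strategy matches the paper's closely: pick a small $r$-graph $G$ with irrational Lagrangian in a quadratic field $\mathbb{Q}(\sqrt d\,)$, obtain the lower bound from a weighted blow-up, and certify the upper bound $\pi(\mathcal{F})\leq\lambda(G)$ by a flag algebra computation carried out with arithmetic in $\mathbb{Z}[\sqrt d\,]$. This is exactly what Theorem \ref{fin:thm} does, with $G_1=F_{3,2}$ (giving $\lambda=\tfrac{189+15\sqrt5}{961}$) and $G_7=K_5^{(3)}\setminus\{345\}$ (giving $\lambda=\tfrac{-35+13\sqrt{13}}{27}$) supplying the irrational values. Your discussion of Brown--Simonovits, of the tension in choosing $\mathcal{F}$ neither too large nor too small, and of coaxing the SDP solver to round into $\mathbb{Q}(\sqrt d\,)$ is all correct.

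There is, however, a concrete error in the lower-bound construction. You describe it as an \emph{iterated} weighted blow-up, recursing inside each class, and assert that this ``avoids any $r$-graph that is not contained in a blow-up of $F$.'' That claim fails, and with it your lower bound. If the optimal weight vector $x$ has full support (as you assume), recursing inside each class contributes a fixed positive fraction of extra edges per level, and the iterated object has limiting density $\lambda(F)/\bigl(1-\sum_i x_i^r\bigr)>\lambda(F)$; if it were really $\mathcal{F}$-free this would contradict the flag-algebra bound you are about to prove. In fact the iterated construction is typically not even $F$-free: already in the $2$-graph case with $F=K_3$, the iterated complete tripartite graph contains triangles using one edge inside a class and two crossing edges. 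The correct object --- and the one the paper actually uses --- is the single-level weighted blow-up $G(x,n)$, whose density tends to $\lambda(G,x)=\lambda(G)$ and which, being literally a blow-up of $G$, trivially avoids every $r$-graph not contained in a blow-up of $G$. With that substitution your proposal coincides with the paper's argument.
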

Pikhurko \cite{PP} has also disproved this conjecture (for all $r \geq 3$). His proof is  very different and the finite families he obtains are rather large.

We introduce the following notation: given an $r$-graph $G$ with vertex set $[k]$, a vector $x=(x_1,\ldots,x_k)\in \Delta_k$ and a large integer $n$; we define $G(x,n)$, the \emph{$n$ vertex blow-up of $G$ by $x$} to be the blow-up of $G$ in which vertex $i$ is replaced by a class of $\lfloor x_i n\rfloor $ vertices for $1\leq i \leq k-1$ and vertex $k$ is replaced by a class of size $n-\sum_{i=1}^{k-1}\lfloor x_i n\rfloor$. (Thus if $\lambda(G)=\lambda(G,x)$ then $\lim_{n\to \infty}e(G(x,n))/\binom{n}{r}=\lambda(G)$.)

\begin{theorem}\label{fin:thm}
In each case below the finite family of $3$-graphs $\F_i$ together with the $3$-graph $G_i$ and weighting $x$ satisfy: $G_i(x,n)$ is $\F_i$-free for all $n\geq 1$ and $\pi(\F_i)=\lambda(G_i)$.

\begin{align*}
&G_1=\{123,124,125,345\}=F_{3,2}\\
&\lambda(G_1)=\frac{189+15 \sqrt{5}}{961},\quad x_1=x_2=\frac{13+3\sqrt{5}}{62} ,x_3=x_4=x_5=\frac{6-\sqrt{5}}{31} .\\
&\F_1=\{\{123, 124, 135, 146, 156\},\{123, 124, 156, 346, 257\},\{123, 124, 156, 347, 567\}\}.\\
\\
&G_2=\{123, 234,345,145,125\}=C_5,\\
&\lambda(G_2)=\frac{6}{25},\quad x_1=x_2=x_3=x_4=x_5=\frac{1}{5}.\\
&\F_2=\{\{123, 124, 134\}, \{123, 124, 125, 345\}, \{123, 124, 135, 256, 167, 467\}\}.\\
\\
&G_3=\{123, 124, 134\}=K_4^-\\
&\lambda(G_3)=8/27,\quad x_1=\frac{1}{3},x_2=x_3=x_4=\frac{2}{9}.\\
&\F_3=\{\{123, 124, 134, 234\},\{123, 124, 125, 345, 346\},\{123, 124, 345, 156, 256\}, \{123, 124, 125, 346, 356, 456\}\}.\\
\\
&G_4=\{123, 124,125,134,135,145\}=F_{1,4}\\
&\lambda(G_3)=\frac{1}{3},\quad x_1=\frac{1}{3},x_2=x_3=x_4=x_5=\frac{1}{6}.\\ 
&\F_4=\{\{123, 124, 134, 156, 256\}, \{123, 124, 134, 125, 126, 357, 367, 457, 467, 567\}, \{123, 124, 345, 156, 257\}\}\\
\\
&G_5=\{123, 124, 125, 126, 134, 135, 146, 235, 246, 256, 345, 346, 356, 456\}=K_6^{(3)}\setminus C_6,\\
&\lambda(G_5)=\frac{7}{18},\quad x_1=x_2=x_3=x_4=x_5=x_6=\frac{1}{6}.\\
&\F_5=\{\{123, 124, 135, 145, 346, 256\},\{123, 124, 134, 125, 345, 136, 246\},
\{123, 124, 134, 125, 135, 126, 136,\\& 456\}\}.
\end{align*}
\begin{align*}
&G_6=\{123,124,134,234,135,235,145,245\}=K_5^{(3)}\setminus\{125,345\} \\
&\lambda(G_6)=\frac{32}{81},\quad x_1=x_2=x_3=x_4=\frac{2}{9},x_5=\frac{1}{9}.\\
&\F_6=\{ \{123, 124, 125, 346, 356, 456\},\{123, 124, 135, 256, 346, 456\}, \{123, 124, 135, 145, 256, 346\},\{123, 124,\\&134, 125, 126, 356, 456\}, \{123, 124, 134, 125, 135, 126, 136, 456\},\{123, 124, 134, 125, 135, 245, 146, 246, 256\},\\& \{123, 124, 134, 125, 135, 345, 126, 146, 346\},\{123, 124, 134, 125, 135, 235, 245, 146, 246\}\}.\\
\\
&G_7=\{123, 124,134, 234, 125,135,235,145,245\}=K_5^{(3)}\setminus\{345\},\\ 
&\lambda(G_7)=\frac{-35+13 \sqrt{13}}{27},\quad x_1=x_2=\frac{5-\sqrt{13}}{6}  ,x_3=x_4=x_5=\frac{-2+\sqrt{13}}{9} .\\
&\F_7=\{\{123, 124, 135, 345, 146, 256, 346\},\{123, 124, 134, 125, 135, 126, 136, 456\},\{123, 124, 134, 125, 136, \\&256,356, 456\},\{123, 124, 134, 125, 135, 145, 126, 136, 146, 156\},\{123, 124, 134, 234, 125, 135, 245, 236, 146,\\&346\}\}.
\end{align*}
\end{theorem}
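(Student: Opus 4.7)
The theorem splits into a lower bound $\pi(\F_i) \geq \lambda(G_i)$ and an upper bound $\pi(\F_i) \leq \lambda(G_i)$ for each of the seven families, and my plan is to handle these separately using completely different techniques.

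For the lower bound I would first verify that $\lambda(G_i,x) = \lambda(G_i)$ at the indicated weight vector, a direct calculation of $3!\sum_{e\in G_i}\prod_{j\in e}x_j$, followed by a Karush--Kuhn--Tucker check that $x$ is actually a maximum (the partial derivatives $\partial \lambda(G_i,x)/\partial x_j$ agree for all $j$ in the support of $x$ and do not exceed this common value for $j$ outside the support). Then, to show the blow-up $G_i(x,n)$ is $\F_i$-free, I would check for every $F \in \F_i$ that $F \not\leq G_i$. Since each $G_i$ has at most six vertices and each forbidden $F$ has a bounded number of vertices, this reduces to a finite search over maps $V(F)\to V(G_i)$: $F$ embeds in some blow-up of $G_i$ iff such a map sends each edge of $F$ to a non-degenerate edge of $G_i$. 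For covering $F$ this simplifies further via Lemma \ref{cover:lem}. Once non-containment is confirmed, the edge density of $G_i(x,n)$ tends to $\lambda(G_i)$, giving the claimed lower bound.

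The substantive part is the matching upper bound, which I would obtain via Razborov's flag algebra framework. For each family $\F_i$, the strategy is to produce a sum-of-squares certificate at some flag level $\ell$ (typically $\ell = 6$ or $7$) certifying that every $\F_i$-free 3-graph has asymptotic edge density at most $\lambda(G_i)$. In practice this means solving a semidefinite program whose primal optimum is $\lambda(G_i)$; the main obstacle is that the SDP must be tight, so the dual certificate's positive semidefinite matrices must have the extremal 3-graph $G_i(x,n)$ in their kernel. This pins down which flag types and which Cauchy--Schwarz products are essential, and typically requires some trial and error in choosing the flag types that resolve the construction. For the irrational cases (families $\F_1$ and $\F_7$) the entries of the certificate matrices, as well as the weights $x_j$, lie in the quadratic extensions $\mathbb{Q}[\sqrt{5}]$ and $\mathbb{Q}[\sqrt{13}]$ respectively; the certificate must therefore be produced and verified over these extensions using exact integer arithmetic, so that every inequality is established symbolically rather than numerically.

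Finally, since $\lambda(G_1) = (189+15\sqrt{5})/961$ and $\lambda(G_7) = (-35+13\sqrt{13})/27$ are both irrational, the cases $i=1$ and $i=7$ immediately disprove Conjecture \ref{CG:conj}: they exhibit explicit finite families of 3-graphs whose Tur\'an densities are irrational.
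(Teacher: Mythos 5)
Your proposal matches the paper's approach: the lower bound comes from verifying the Lagrangian value and checking $F\not\leq G_i$ for each $F\in\F_i$ (so that $G_i(x,n)$ is $\F_i$-free and has density tending to $\lambda(G_i)$), and the upper bound is a flag-algebra SDP certificate verified by exact integer arithmetic over $\mathbb{Q}$, $\mathbb{Q}[\sqrt{5}]$ or $\mathbb{Q}[\sqrt{13}]$ as appropriate. The paper likewise emphasises that for the two irrational cases the extremal construction $G_i(x,n)$ must lie in the kernel of the SDP dual matrices, which is exactly the tightness condition you describe.
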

In particular we have the following corollary.
\begin{corollary}
We have the following new Tur\'an densities for finite families of $3$-graphs
\[
\left\{\frac{189+15\sqrt{5}}{961},\frac{6}{25},\frac{8}{27},\frac{1}{3},\frac{7}{18},\frac{32}{81},\frac{13\sqrt{13}-35}{27}\right\}\subseteq \Pi_{\tr{fin}}^{(3)}.\]
\end{corollary}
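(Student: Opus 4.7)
For each of the seven families $\mathcal{F}_i$ there are three things to establish: (a) $\lambda(G_i,x)=\lambda(G_i)$, so the weighting in the statement actually attains the Lagrangian; (b) $G_i(x,n)$ is $\mathcal{F}_i$-free for every $n$; and (c) the matching upper bound $\pi(\mathcal{F}_i)\leq\lambda(G_i)$. Together (a) and (b) give the lower bound $\pi(\mathcal{F}_i)\geq\lim_{n\to\infty}e(G_i(x,n))/\binom{n}{3}=\lambda(G_i)$, so the substantive content splits into two finite verification tasks and one flag algebra computation.

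The plan for (a) and (b) is essentially finite and uniform across the seven cases. For (a), each $G_i$ has at most six vertices, so $\lambda(G_i,\cdot)$ is a polynomial in a handful of variables on a low-dimensional simplex. I would check the gradient/KKT stationarity conditions on the support of the given $x$, and then exclude by direct symbolic calculation every subface of the simplex on which a strictly higher value could be attained, working in $\mathbb{Q}$ or, for $i\in\{1,7\}$, in $\mathbb{Q}(\sqrt{5})$ or $\mathbb{Q}(\sqrt{13})$. For (b), since $G_i(x,n)$ is a blow-up of $G_i$, it suffices to show $F\not\leq G_i$ for each $F\in\mathcal{F}_i$, i.e.\ that no map $\phi\colon V(F)\to V(G_i)$ sends every edge of $F$ to a size-three edge of $G_i$; this is a brute-force enumeration over finitely many maps, feasible because both $|V(G_i)|$ and $|V(F)|$ are small.

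For (c) I would invoke Razborov's flag algebra framework exactly as in Theorems~\ref{2.9:thm}--\ref{3.4:thm}, using the \prog pipeline. The steps are: fix a type size and a small ground set size $N$; enumerate all $\mathcal{F}_i$-free $3$-graphs on $N$ vertices and the corresponding flags for each chosen type; assemble and numerically solve the semidefinite program whose feasibility at value $\lambda(G_i)$ certifies $\pi(\mathcal{F}_i)\leq\lambda(G_i)$; round the solution to an exact positive-semidefinite certificate checkable by integer arithmetic (or by arithmetic in the appropriate quadratic extension for $i\in\{1,7\}$); and verify the final linear combination. Liberal use of Corollary~\ref{sat:cor}(ii) to augment each $\mathcal{F}_i$ with further forbidden subgraphs contained in blow-ups of existing members shrinks the space of admissible flags and keeps the SDP tractable, which matters most for $\mathcal{F}_6$ and $\mathcal{F}_7$.

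The hard part will be the two irrational cases, $\mathcal{F}_1$ and $\mathcal{F}_7$. One must read the correct quadratic extension off the Lagrangian computation in (a) and then perform the SDP rounding entirely inside $\mathbb{Q}(\sqrt{5})$ (respectively $\mathbb{Q}(\sqrt{13})$), so that the certified bound lands exactly on $\lambda(G_i)$ rather than on some nearby rational or algebraic approximation. Achieving this requires a highly accurate numerical SDP solution together with an extension-aware rounding procedure whose rounded matrix is still positive semidefinite over the extension; this is the genuine novelty of the section and is what enables the corollary disproving Conjecture~\ref{CG:conj}. Everything else — Lagrangian verification, freeness checking, and the rational flag algebra arguments for $i\in\{2,3,4,5,6\}$ — is essentially a larger-scale version of the techniques already developed for the individual $3$-graph results in Section~\ref{simple:sec}.
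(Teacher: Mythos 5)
Your decomposition into (a) Lagrangian verification, (b) blow-up-freeness of $G_i(x,n)$, and (c) a flag-algebra upper bound matching $\lambda(G_i)$ is exactly the structure of the paper's argument for Theorem~\ref{fin:thm}, from which the corollary is read off directly; your observation that the SDP certificate for $i\in\{1,7\}$ must be rounded inside $\mathbb{Q}(\sqrt{5})$ and $\mathbb{Q}(\sqrt{13})$ respectively (guided by the Lagrangian/extremal construction) matches the paper's remarks in Section~\ref{comp:sec}. The approach and the key ideas are the same.
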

See Section \ref{comp:sec} for discussion of our proof methods. Detailed computational proofs can be found in the appendix files \texttt{Root5-prf.txt, 6-25-prf.txt, 8-27-prf.txt, 1-3-prf.txt, 7-18-prf.txt, 32-81-prf.txt} and \texttt{Root13-prf.txt}.
\section{Tur\'an's problem}\label{turan:sec}
Tur\'an famously conjectured that $\tr{ex}(n,K_4^{(3)})=e(T_n)$, thus in particular $\pi(K_4^{(3)})=5/9$. If Tur\'an's conjecture is true then there are in fact exponentially many non-isomorphic extremal examples of $K_4^{(3)}$ $3$-graphs with $\tr{ex}(n,K_4^{(3)})$ edges (as described by Kostochka \cite{K} and Fon-der-Flass \cite{FdF}). 

Much work has been done on  Tur\'an's conjecture and recently Razborov gave the upper bound of $\pi(K_4^{(3)})\leq 0.561666$. Moreover, he noted that since Tur\'an's construction $T_n$ contains no set of four vertices inducing a single edge it is natural to consider a related ``induced Tur\'an'' problem. 

Given a family of $r$-graphs $\F$ we say that an $r$-graph $G$ is \emph{induced $\F$-free} if $G$ has no induced subgraph isomorphic to a member of $\F$. We then define the \emph{induced Tur\'an number} of a family of $r$-graphs $\F$ to be
\[
\tr{ex}_{\tr{ind}}(n,\F)=\max\{e(G):G\tr{ is an induced $\F$-free $r$-graph of order $n$}\}.\]
The \emph{induced Tur\'an density of $\F$} is then\[
\pi_{\tr{ind}}(\F)=\lim_{n\to \infty}\frac{ \tr{ex}_{\tr{ind}}(n,\F)}{\binom{n}{r}}.\]
\begin{theorem}[Razborov \cite{R4}]\label{raz:thm}
If $E_1$ is the $3$-graph with 4 vertices and 1 edge then $\pi_{\tr{ind}}(K_{4}^{(3)},E_1)=5/9$.
\end{theorem}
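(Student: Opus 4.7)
The lower bound $\pi_{\tr{ind}}(K_4^{(3)}, E_1) \geq 5/9$ reduces to checking that Tur\'an's construction $T_n$ is induced $\{K_4^{(3)}, E_1\}$-free, since $e(T_n)/\binom{n}{3} \to 5/9$. With $T_n$ built on a balanced tripartition $V_0 \cup V_1 \cup V_2$, any $4$-set $S$ has a profile $(a_0, a_1, a_2)$ with $a_i = |S \cap V_i|$ summing to $4$. A short case check, reduced by the cyclic symmetry of the construction, shows each $4$-set induces exactly $0$, $2$, or $3$ edges: for example $(3,1,0)$ gives $3$ edges (the three transversal triples), $(2,2,0)$ gives $2$, $(2,1,1)$ gives $3$, while $(4,0,0)$ and $(3,0,1)$ give $0$. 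Since $1$ and $4$ never occur, $T_n$ is induced-free of both $E_1$ and $K_4^{(3)}$.

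For the upper bound I would deploy Razborov's flag algebra framework, in the computational style used throughout this paper. The key reformulation of the hypothesis is that a $\{K_4^{(3)}, E_1\}$-induced-free $3$-graph has only $4$-vertex induced subgraphs with $0$, $2$, or $3$ edges, so the induced densities of $E_1$ and $K_4^{(3)}$ both vanish in any density-achieving sequence. The goal is to produce a flag algebra identity
\begin{equation*}
5/9 \; - \; \rho \; = \; \alpha \, d_{\tr{ind}}(E_1) \; + \; \beta \, d_{\tr{ind}}(K_4^{(3)}) \; + \; \sum_{j} Q_j(M_j),
\end{equation*}
where $\rho$ is the asymptotic edge density, $\alpha, \beta \geq 0$ are scalars, and each $Q_j(M_j)$ is a non-negative quadratic form in $\sigma_j$-flags certified by a positive semidefinite matrix $M_j$. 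Averaged over any induced $\{K_4^{(3)}, E_1\}$-free limit, the two induced-density terms vanish, each $Q_j$-term is non-negative, and one concludes $\rho \leq 5/9$. Concretely, one enumerates all $3$-graphs on five or six vertices, chooses suitable small types $\sigma_j$ and lists of $\sigma_j$-flags of one fixed order, writes the coefficient-matching conditions as a semidefinite program in $(\alpha, \beta, M_j)$, solves it numerically, and rounds to rationals. Tur\'an's construction saturates the target, so the SDP is tight and its dual solution recovers the flag profile of $T_n$, giving a built-in sanity check.

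The main obstacle is the usual one for flag algebra proofs at tight instances: the flags must be of high enough order to reach the sharp constant $5/9$ (without the induced $E_1$ hypothesis the best known SDP bound is Razborov's $0.561666$, so the $E_1$ constraint must be exploited essentially), and the numerically obtained PSD matrices must be rounded to rational matrices that still satisfy $M_j \succeq 0$ together with the linear matching equations, so that the final certificate can be verified in integer arithmetic. This rounding step is delicate precisely because the extremal $M_j$ are rank-deficient with several eigenvalues that are exactly zero, and arbitrary perturbations would destroy positivity; identifying the correct kernel directions from the flag decomposition of $T_n$ is what makes the step tractable.
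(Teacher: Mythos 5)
This theorem is not proved in the paper: it is stated with attribution to Razborov \cite{R4} and used as a point of comparison for the non-induced results of Section \ref{turan:sec}. So there is no in-paper proof for your proposal to match; what you have written is instead a faithful reconstruction, in outline, of Razborov's original argument, and on that basis it is sound. Your lower-bound case check is correct: a $4$-set of profile $(a_0,a_1,a_2)$ in $T_n$ always spans $0$, $2$, or $3$ edges (up to cyclic rotation the counts are $(4,0,0)\mapsto 0$, $(3,1,0)\mapsto 3$, $(3,0,1)\mapsto 0$, $(2,2,0)\mapsto 2$, $(2,0,2)\mapsto 2$, $(2,1,1)\mapsto 3$), so neither $E_1$ nor $K_4^{(3)}$ appears induced. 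One small slip: in the $(3,1,0)$ case the three edges are the triples with two vertices in $V_0$ and one in $V_1$, not ``transversal triples''; the count is right but the description is not.

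Your upper-bound sketch also matches the structure of Razborov's proof and the paper's commentary on it. In the induced setting one simply restricts the flag-algebra enumeration to the $\{K_4^{(3)},E_1\}$-induced-free $3$-graphs on $6$ vertices; the paper notes there are only $34$ of these, versus $964$ that are merely $K_4^{(3)}$-free, which is exactly what makes the induced problem tractable at $5/9$ while the non-induced problem is not. With that restriction your coefficients $\alpha,\beta$ on the induced densities of $E_1$ and $K_4^{(3)}$ are unnecessary (those graphs simply do not appear in the enumeration), but including them with free sign is an equivalent formulation. You also correctly identify the delicate step: Razborov used the flag profile of $T_n$ to pin down the kernels of the rank-deficient PSD blocks before rounding to an exact certificate, and the paper explicitly contrasts this with its own proofs, where the authors managed to identify the sharp inequalities and zero eigenvalues without leaning on the extremal construction. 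In short, your proposal is a correct high-level account of the cited proof; it is not a new argument, and the paper itself contains no proof of this statement to compare against beyond the pointers above.
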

Our aim in this section is to give some non-induced results of a similar nature. In particular we have an example of a $3$-graph $H$ satisfying $\pi(H)=3/4$ and $\pi(K_4^{(3)},H)=5/9$.
\begin{theorem}\label{k4:thm}
Each of the $3$-graphs $H_i$ listed below satisfies $\pi(K_4^{(3)},H_i)=5/9$. They are all incomparable with respect to blow-ups. (For reference we also note the numerical upper bounds we found for $\pi(H_i)$ in each case).
\begin{align*}
 &H_1 =\{123, 124, 134, 125, 135 ,245, 345, 126,236, 146, 156,  456\}\quad \pi(H_1)=3/4,\\
&H_2 =\{123, 124, 134, 125, 135, 126, 236, 146, 346, 356, 456\}\quad  \pi(H_2)\leq 0.613,\\
&H_3 =\{123, 124, 134, 125, 135, 245,  345, 126, 236, 346, 356\}\quad  \pi(H_3)\leq 0.613,\\
&H_4 =\{123, 124, 134, 125, 135, 245, 236, 146, 346, 156, 456\}\quad  \pi(H_4)\leq 0.608,\\
&H_5 =\{123, 124, 134, 125, 135, 245, 345, 236, 146, 256, 456\}\quad  \pi(H_5)\leq 0.608,\\
&H_6 =\{123, 124, 134, 125, 135, 245, 345, 126, 136, 246, 346, 456\}\quad  \pi(H_6)\leq 0.597,\\
&H_7 =\{123, 124, 134, 125, 345, 136, 246, 256, 356, 456\}\quad  \pi(H_7)\leq 0.595,\\
&H_8 =\{123, 124, 134, 125, 135, 236, 146, 246, 156, 256, 456\}\quad  \pi(H_8)\leq 0.594,\\
&H_9 =\{123, 124, 134, 125, 135, 245, 236, 346, 356, 456\}\quad  \pi(H_9)\leq 0.555566,\\
&H_{10} =\{123, 124, 134, 125, 135, 245, 236, 246, 346, 456\}\quad  \pi(H_{10})\leq 0.55555557.
\end{align*}
\end{theorem}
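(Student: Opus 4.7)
The plan splits into the usual two halves: for each $H_i$, prove the lower bound $\pi(K_4^{(3)},H_i)\ge 5/9$ by exhibiting Tur\'an's construction $T_n$ as an admissible extremal sequence, and prove the matching upper bound $\pi(K_4^{(3)},H_i)\le 5/9$ via the flag algebra framework.

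For the lower bound, the construction $T_n$ is classically $K_4^{(3)}$-free and satisfies $\lim_{n\to\infty}e(T_n)/\binom{n}{3}=5/9$, so it suffices to verify that $T_n$ contains no copy of $H_i$. I would do this by checking, for each $H_i$, that no assignment $\phi:V(H_i)\to\{V_0,V_1,V_2\}$ of the (at most 6) vertices to the three parts of $T(V_0,V_1,V_2)$ makes every edge of $H_i$ an edge of $T_n$. Recall that the allowed edge types are: one vertex in each $V_j$, or two vertices in some $V_j$ together with one vertex in $V_{j+1\bmod 3}$; no edge may have all three vertices in one part, nor two in $V_j$ with one in $V_{j-1\bmod 3}$. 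Since there are at most $3^6=729$ candidate assignments per $H_i$, an exhaustive case analysis (or a short computer check) suffices. Equivalently, one checks that $H_i$ is not a subgraph of any blow-up of the 3-vertex ``cyclic'' pattern underlying $T_n$.

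For the upper bound, the plan is to apply Razborov's flag algebra framework together with semi-definite programming. Razborov's bound $\pi(K_4^{(3)})\le 0.561666$ is numerically very close to $5/9\approx 0.5556$, so the hope is that adjoining the forbidden $H_i$ shaves off the remaining slack. Concretely, I would fix a ground set of flags on $n_0$ vertices (starting with $n_0=7$ and increasing as needed), generate all type/flag pairs compatible with forbidding both $K_4^{(3)}$ and $H_i$, and set up the SDP whose feasible solutions certify $\pi(K_4^{(3)},H_i)\le 5/9$. Because the target density matches the value attained by $T_n$, the SDP should be tight, meaning the dual solution detects $T_n$ as the unique (asymptotic) extremal structure and every other configuration is strictly suppressed by the positive-semidefinite slack. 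After obtaining a floating point optimum I would round the dual matrices to rationals (preserving positive semidefiniteness by a small shift along the identity if necessary) and verify the final certificate using only integer arithmetic, as per the paper's standing convention.

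The main obstacle is twofold. First, the rationalisation step: several of the $H_i$ have numerical upper bounds (e.g.\ $0.55555557$ for $H_{10}$) that are extremely close to $5/9$, so the underlying SDP is nearly degenerate and rounding to an exact rational certificate without losing feasibility is delicate; one typically needs to identify the kernel of the optimal matrix (which encodes $T_n$'s structure and its exponentially many extremal perturbations described by Kostochka and Fon-der-Flass) and round only in the orthogonal complement. Second, because the ten $H_i$ are incomparable under blow-ups, none of the ten cases can be deduced from the others via Corollary~\ref{sat:cor}, so ten independent (but structurally similar) computations are required; however, since the candidate extremal configuration $T_n$ is the same for all ten cases, one can reuse the same set of types and zero-eigenvector structure across the runs, which should keep the workload manageable.
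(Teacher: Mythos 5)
Your plan coincides with the paper's: lower bound by verifying that Tur\'an's construction $T_n$ is $\{K_4^{(3)},H_i\}$-free, upper bound by a flag-algebra SDP whose numerical solution is rounded to an exact rational certificate (mimicking Razborov's proof of the induced $\{K_4^{(3)},E_1\}$ result). Two small corrections: the paper works with $6$-vertex flags rather than $7$ (all $H_i$ have $6$ vertices, and the computation uses the $\{K_4^{(3)},H_i\}$-free $3$-graphs of order $6$, e.g.\ $962$ of the $964$ $K_4^{(3)}$-free $3$-graphs of order $6$ survive when $H_1$ is also forbidden); and you omit the two non-SDP parts of the statement, namely that the $H_i$ are pairwise incomparable under blow-ups (a finite subgraph check, since each $H_i$ is covering so Lemma~\ref{cover:lem} applies) and that $\pi(H_1)=3/4$ (lower bound from $F_{3,3}\subseteq H_1$, upper bound because $H_1$ is a subgraph of the second $3$-graph in Theorem~\ref{3.4:thm}).
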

The most interesting case of this result is $H_1$ so we focus on that now. It is straightforward to check that $H_1$ has a subgraph isomorphic to $F_{3,3}$ and hence is not bipartite. Moreover $H_1$ is a subgraph of the second example from Theorem \ref{3.4:thm}, hence $\pi(H_1)=3/4$. The proof of the second part of this theorem is again computational and can be found in the appendix, although a few remarks are in place. 

Our proof of Theorem \ref{k4:thm} mimics that of Razborov's proof of Theorem \ref{raz:thm}. In particular the flag algebra computation we perform makes use only of information contained in the $\{K_4^{(3)},H_1\}$-free $3$-graphs of order 6. (In Razborov's case these are replaced by the $K_4^{(3)}$-free $3$-graphs of order 6 with no induced $E_1$.) There are precisely 964 non-isomorphic $K_4^{(3)}$-free $3$-graphs of order 6. Of these exactly 34 do not contain an induced $E_1$ and thus play a role in Razborov's proof of Theorem \ref{raz:thm}. However it turns out that 962 of the $K_4^{(3)}$-free 3-graphs are $H_1$-free and are thus considered in the proof of Theorem \ref{k4:thm}.

See the final section for discussion of our proof methods. Detailed computational proofs of the results in this section can be found in the appendix files \texttt{K4+H-01-prf.txt} to \texttt{K4+H-10-prf.txt}.

A natural question to ask is whether any of the other $K_4^{(3)}$-free 3-graphs with $e(T_n)$ edges (described by Kostochka \cite{K}) are also $H_i$-free for each $i$. Although some of the other constructions are $H_i$-free for some $i$ we have a stability result saying that all almost extremal examples of $\{K_4^{(3)},H_i\}$-free 3-graphs have essentially the same structure as Tur\'an's construction $T_n$. See the next section for details.
\section{Stability and exactness}\label{stab:sec}
Given a family of $r$-graphs $\F$, we call a sequence of $r$-graphs $\{G_n\}_{n=1}^\infty$ \emph{almost extremal for $\F$} if each $G_n$ is $\F$-free of order $n$ with $d(G_n)=\pi(\F)+o(1)$.

\begin{theorem}[Stability]\label{stab:thm} Let $\F$ be one of the families of 3-graphs whose Tur\'an density is determined in Theorem \ref{2.9:thm}, \ref{4.9:thm}, \ref{5.9:thm}, \ref{3.4:thm} or \ref{k4:thm} and let $C_n\in \{S_n,J_n,T_n,B_n\}$ be the corresponding $\F$-free $3$-graph with density $d(C_n)=\pi(\F)+o(1)$. If $\{G_n\}_{n=1}^\infty$ is almost extremal for  $\F$ then we can make $G_n$ isomorphic to $C_n$ by changing at most $o(n^3)$ edges.
\end{theorem}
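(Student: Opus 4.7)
My plan is to prove one stability statement per extremal construction $C_n\in\{S_n,J_n,T_n,B_n\}$ rather than one per family, as the authors signal. Fix a construction $C_n$ with $\pi^*=\lim_n d(C_n)$ and a family $\F$ from Theorems \ref{2.9:thm}--\ref{k4:thm} whose Tur\'an density is $\pi^*$. The scheme is: (i) extract from the flag algebra certificate the vanishing of many ``forbidden'' flag densities in any almost extremal $\F$-free sequence; (ii) apply hypergraph removal to delete $o(n^3)$ edges and obtain a genuinely $\mathcal{B}_C$-free 3-graph; (iii) classify $\mathcal{B}_C$-free 3-graphs of density $\pi^*-o(1)$ as approximate copies of $C_n$.

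For step (i), the flag algebra proof of $\pi(\F)\le\pi^*$ is a limit inequality of the shape
\[
d(G)\le \pi^*+\sum_F c_F\, d(F,G)-\sum_i S_i(G)
\]
valid for every $\F$-free $3$-graph $G$, in which every $c_F$ is non-positive and every $S_i(G)$ is a non-negative Cauchy--Schwarz slack built from types and flags. For an almost extremal sequence $\{G_n\}$ the left-hand side tends to $\pi^*$, forcing both $d(F,G_n)\to 0$ for every $F$ with $c_F<0$ and $S_i(G_n)\to 0$ for every $i$. The computational freedom in the SDP is used to arrange the set $\mathcal{B}_C:=\{F:c_F<0\}$ to contain every $k$-vertex $3$-graph (for the working $k$, typically $6$ or $7$) that does \emph{not} embed into any blow-up of $C_n$. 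Verifying that the certificates constructed in the appendix files actually have this property is a mechanical inspection of the support of the SDP solution.

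Step (ii) is then a direct application of the $3$-uniform hypergraph removal lemma (Nagle--R\"odl--Schacht--Skokan, Gowers): since each member of the finite family $\mathcal{B}_C$ appears with density $o(1)$ in $G_n$, we may delete $o(n^3)$ edges to leave a $\mathcal{B}_C$-free $3$-graph $G_n'$. Step (iii) exploits the fact that $\mathcal{B}_C$-freeness is a genuine colourability condition: every $k$-vertex subset of $G_n'$ embeds into a blow-up of $C_n$, and a standard voting/majority argument on pairs produces a partition of $V(G_n')$ whose parts match those of $C_n$ and contains all but $o(n^3)$ edges in the ``correct'' positions. For $C_n\in\{B_n,S_n,J_n\}$ this is the obvious $2$- or $3$-colouring lemma; for $C_n=T_n$ it is essentially the stability argument accompanying Razborov's Theorem \ref{raz:thm}, which can be quoted almost verbatim.

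The main obstacle will be step (i) in the $T_n$ case of Theorem \ref{k4:thm}. Here we must confirm that the SDP certificate we obtain rules out not only $K_4^{(3)}$ itself but also the alternative non-Tur\'an extremal $K_4^{(3)}$-free configurations of Kostochka and Fon-der-Flass; otherwise, a hypothetical almost extremal sequence could approach one of those constructions instead of $T_n$, and no amount of edge-switching would bring it within $o(n^3)$ of $T_n$. Concretely, one has to check that the ``equality flags'' (those $F$ for which $c_F=0$ and which avoid the Cauchy--Schwarz slack) are precisely the subgraphs of a single blow-up of $T_n$, and not of any mixed Kostochka configuration. Since the $6$-vertex $\{K_4^{(3)},H_i\}$-free $3$-graphs form a finite and computationally enumerated list, this reduces to a finite case analysis on that list, which the proofs in the appendix files \texttt{K4+H-01-prf.txt}--\texttt{K4+H-10-prf.txt} carry out implicitly by the choice of SDP support.
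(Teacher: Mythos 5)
Your overall scheme (flag algebra certificate $\to$ hypergraph removal $\to$ structural classification) matches the paper's, and your classification step is an acceptable alternative to the paper's: where you invoke a majority/voting argument on pairs, the paper instead proves a clean ``$6$-induced'' structural lemma (Lemma~\ref{ind:lem}) asserting that being complete tripartite, complete $(2,1)$-colourable, or complete bipartite is determined by $6$-vertex induced subgraphs, so that $\I_6(G_n')\subseteq\I_6(C_n)$ already forces $G_n'\in\{S,J,B\}(V_0,V_1,V_2)$ exactly, with a final calculus step balancing the parts. For the density-$5/9$ families the paper, like you, simply defers to Pikhurko's stability theorem for $\{K_4^{(3)},E_1\}$.

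However, two points that the paper must (and does) handle are passed over in your step~(i), and the second is a genuine gap. First, you write that ``the computational freedom in the SDP is used to arrange'' that the set of $6$-vertex graphs with nonzero coefficient is exactly those not embeddable in a blow-up of $C_n$, and that confirming this is ``a mechanical inspection.'' The paper explicitly flags that this is \emph{not} true for a substantial fraction of the families (\texttt{3-4-01}, \texttt{3-4-03}, \texttt{5-9-01}, \texttt{5-9-06}, all of \texttt{K4+H-02} through \texttt{K4+H-10}): the SDP leaves some spurious potentially-sharp $6$-vertex graphs that are not induced subgraphs of $C_n$, and one must then argue separately that these spurious graphs are in fact $\F$-negligible. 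Your inspection would simply \emph{fail} on these families, and without the extra negligibility argument the removal lemma does not land you in $\mc{B}_C$-free territory.

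Second, for roughly half of the families (the ones using Corollary~\ref{sat:cor}: \texttt{2-9}, \texttt{4-9-01}, \texttt{4-9-02}, \texttt{5-9-01} through \texttt{5-9-04}, \texttt{5-9-06}, \texttt{5-9-11}) there is no flag algebra certificate directly for $\F$ at all: the certificate is for an auxiliary family $\F'$ obtained by adjoining extra $3$-graphs whose blow-ups contain members of $\F$. A certificate valid on $\F'$-free $3$-graphs says nothing a priori about an almost extremal $\F$-free sequence $\{G_n\}$, which need not be $\F'$-free. You need an intermediate supersaturation step: any $6$-vertex $H$ containing some $F'\in\F'$ must satisfy $p(H;G_n)=o(1)$, because otherwise supersaturation would force large blow-ups of $F'$ in $G_n$ and hence a member of $\F$. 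Only after removing those copies does $\{G_n\}$ become almost extremal for $\F'$ and the certificate becomes applicable. This bridge is essential and is missing from your outline.

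Finally, a minor remark: the concern you raise in your last paragraph about Kostochka/Fon-der-Flass configurations competing with $T_n$ is real, but the paper does not resolve it by inspecting the SDP support; it simply cites Pikhurko's stability theorem for $\{K_4^{(3)},E_1\}$, which already shows that any almost extremal $\{K_4^{(3)},E_1\}$-free sequence is $o(n^3)$-close to $T_n$, and checks that this transfers to the $\{K_4^{(3)},H_i\}$ families.
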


It turns out that some cases of Theorem \ref{stab:thm} require a little more work to prove than others. We give the proof in the ``easy'' case and then indicate how the other cases can be proved. 

Our proof follows a similar argument to that given for the family $\F=\{K_4^{(3)},E_1\}$ by Pikhurko (Theorem 2 \cite{Pexact}), although fortunately we can use the fact that each of the constructions we consider is characterised by its small induced subgraphs to avoid proving a separate result for each family (see Lemma \ref{ind:lem}). 

If $G$ is an $r$-graph let $\I_k(G)=\{G[A]:A\subseteq V(G), |A|=k\}$ be the set of all $k$-vertex induced subgraphs of $G$. Given another $r$-graph $H$ we define $p(H;G)$ to be the \emph{induced density} of $H$ in $G$: this is the probability that if $A\subseteq V(G)$ is a set of $|V(H)|$ vertices chosen uniformly at random then the subgraph induced by $A$ is isomorphic to $H$. 

Let $\mathcal{H}_k(\F)$ be the family of all $\F$-free 3-graphs of order $k$ up to isomorphism. We say that $H\in \mathcal{H}_k(\F)$ is $\mathcal{F}$\emph{-sharp} if there exists an almost extremal sequence $\{G_n\}_{n=1}^\infty$ for $\mathcal{F}$ such that $p(H;G_n)\neq o(1)$. If $H\in\mathcal{H}_k(\F)$ is not $\mathcal{F}$-sharp we say it is \emph{$\mathcal{F}$-negligible}. We denote the family of $\mathcal{F}$-sharp 
3-graphs of order $k$ by $\mathcal{H}_k^\#(\F)$.

To motivate our next result consider the following trivial fact. If $G$ is a $2$-graph of order at least 3, with the property that all induced subgraphs of $G$ of order $3$  are complete bipartite graphs then $G$ itself is a complete bipartite graph. In fact analogous results hold for the $3$-graph properties we are interested in.

We say that an $r$-graph property $\mc{P}$ is \emph{$k$-induced} if for any $r$-graph $G$ of order at least $k$, $\I_k(G)\subseteq \mc{P}\implies G\in \mc{P}$.  
\begin{lemma}\label{ind:lem}
The following 3-graph properties are all $6$-induced 
\begin{align*}
\mc{P}_S&=\{G:G\tr{ is a complete tripartite 3-graph}\},\\ 
\mc{P}_J&=\{G:G\tr{ is a complete $(2,1)$-colourable 3-graph}\},\\
\mc{P}_B&=\{G:G\tr{ is a complete bipartite 3-graph}\}.
\end{align*}
\end{lemma}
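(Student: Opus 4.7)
The plan is to prove all three cases by induction on $n = |V(G)|$. The base case $n = 6$ is immediate because $G \in \I_6(G)$, so the hypothesis gives $G \in \mc{P}$ directly. For the inductive step with $n \geq 7$, I first observe that each of $\mc{P}_S, \mc{P}_J, \mc{P}_B$ is closed under taking induced subgraphs: if $G \cong S(V_0, V_1, V_2)$ then $G[W] \cong S(V_0 \cap W, V_1 \cap W, V_2 \cap W)$ is again complete tripartite, and the analogous statements hold for $\mc{P}_J$ and $\mc{P}_B$. Hence the $6$-vertex hypothesis for $G$ passes to each $G - v$ (on $n - 1 \geq 6$ vertices), and the inductive hypothesis yields $G - v \in \mc{P}$ for every $v \in V(G)$. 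The problem thus reduces to showing that, given a compatible witnessing partition for every $G - v$, one can assemble a single partition witnessing $G \in \mc{P}$.

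For concreteness I describe the gluing for $\mc{P}_B$; the arguments for $\mc{P}_S$ and $\mc{P}_J$ follow the same pattern. Fix two distinct vertices $v_0, v_1 \in V(G)$ and let $\pi_0, \pi_1$ be bipartitions witnessing $G - v_0 \in \mc{P}_B$ and $G - v_1 \in \mc{P}_B$. Both restrict to bipartitions of the common induced subgraph on $V(G) - \{v_0, v_1\}$, which has $n - 2 \geq 5$ vertices. When this restricted bipartition is unique up to swapping classes---which holds provided neither class has size at most $1$, since then a vertex's class is recoverable from its link on the remaining vertices---the two restrictions of $\pi_0$ and $\pi_1$ must coincide. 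Reading off $v_0$'s class from $\pi_1$ and $v_1$'s class from $\pi_0$ produces a global bipartition $\pi$ of $V(G)$. To verify that $G$'s edges match $\pi$, take any triple $T \subseteq V(G)$; since $n \geq 7$ there exists $v \notin T$, and the edge status of $T$ in $G$ then agrees with the prediction from $\pi$ restricted to $V(G) - v$, which equals the witnessing bipartition of $G - v \in \mc{P}_B$. The versions for $\mc{P}_S$ and $\mc{P}_J$ are the same with tripartition and $(2,1)$-bipartition in place; the $\mc{P}_J$ case is slightly easier because the two classes are intrinsically labelled by their role in each edge, so class-labels match automatically.

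The hard part will be handling the degeneracies where the restricted partition is not unique, which occurs when some class shrinks to size $\leq 1$. My strategy is to choose $v_0, v_1$ from the two largest classes of $\pi_0$, guaranteeing (for $n$ large enough) that every class of the restricted partition still has size $\geq 2$ and is therefore uniquely determined. In the remaining extreme cases---where every $G - v$ itself has a very lopsided partition so that the above choice is unavailable---$G$ is forced to have so few edges or such a constrained structure (empty, near-empty, or dominated by a single vertex) that membership in $\mc{P}$ can be verified by a short direct case analysis. I expect that articulating this degenerate case analysis cleanly and uniformly across $\mc{P}_S, \mc{P}_J, \mc{P}_B$ will be the most delicate step; the rest of the argument is a direct local-to-global assembly enabled by the $6$-vertex hypothesis.
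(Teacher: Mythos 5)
Your plan is a genuinely different route from the paper's. The paper does not use induction on $n$: it defines an equivalence relation on $V(G)$ using prescribed small induced subgraphs (for $\mc{P}_S$, $a\simS b$ iff $a=b$ or some $4$-vertex induced subgraph through $a,b$ is $\{acd,bcd\}$; similarly for $\mc{P}_J$ and $\mc{P}_B$), then verifies transitivity and the twin property by examining $6$-vertex induced subgraphs, and reads the witnessing partition off the equivalence classes. That construction never needs the partition of any subgraph to be unique, so degenerate configurations are absorbed automatically. Your local-to-global gluing is a plausible alternative, but as written it has two concrete problems.

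First, the degenerate-case analysis you defer is a genuine gap, and for $\mc{P}_S$ and $\mc{P}_J$ the obstruction does not disappear for large $n$. If the restriction of $G$ to $V(G)\setminus\{v_0,v_1\}$ is empty (or, for $\mc{P}_S$, has one empty part), then \emph{every} split of its vertex set is a valid witnessing partition, so the restrictions of $\pi_0$ and $\pi_1$ need not coincide in any sense, and your gluing step produces nothing. You must then separately argue that $G$ itself is in $\mc{P}$ in those configurations, and you cannot simply dismiss them as having "so few edges" --- e.g.\ for $\mc{P}_S$ one must still verify that removing-two-vertices-makes-it-empty really forces $G$ to be complete tripartite; this is true but requires an argument. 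Second, your vertex selection is circular as stated: you propose to "choose $v_0,v_1$ from the two largest classes of $\pi_0$," but $\pi_0$ is a partition of $V(G)\setminus\{v_0\}$ and is only defined after $v_0$ has been fixed. Presumably you mean to fix $v_0$ first and then take $v_1$ from a large part of $\pi_0$, but then you need a further argument that this choice keeps the restricted partition unique, and per the first point it can fail. I would also flag that the final verification step ("the prediction from $\pi$ restricted to $V(G)-v$ equals the witnessing bipartition of $G-v$") silently assumes uniqueness of the witnessing partition of every $(n-1)$-vertex induced subgraph, which again fails in the empty case; a correct version needs to compare $\pi$ with the witness of $G-v$ through the already-controlled overlap on $V(G)\setminus\{v,v_0\}$ and $V(G)\setminus\{v,v_1\}$. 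None of this is fatal to the approach, but the deferred case analysis is where the actual work lies, and the proposal does not carry it out.
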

\begin{proof}[Proof of Theorem \ref{stab:thm}:]
Let $\F$ be one of the families given in the statement of Theorem \ref{stab:thm} with corresponding extremal construction $C_n\in\{S_n,J_n,T_n,B_n\}$. Suppose that $\{G_n\}_{n=1}^\infty$ is almost extremal for $\F$.


Our flag algebra proof of the Tur\'an density of $\mathcal{F}$ using $6$-vertex 3-graphs also provides us with information about $\mc{H}_6^\#(\F)$.  The associated proof file, e.g.~\texttt{5-9-05-prf.txt}, contains a list of all $6$-vertex $3$-graphs that potentially belong to $\mc{H}_6^\#(\F)$. In the easy case (which we now assume we are in) this tells us that $\mc{H}^\#_6(\F)\subseteq \I_6(C_n)$, i.e.~the only induced 6-vertex subgraphs that can occur with positive induced density in $G_n$ are those that are found in the corresponding construction $C_n$. 

If $\F$ is a family whose Tur\'an density is $5/9$ then Pikhurko's stability theorem (Theorem 2 \cite{Pi}) for $\{K_4^{(3)},E_1\}$ in fact also applies to $\F$, so let us suppose we have a family $\F$ whose Tur\'an density is $2/9,4/9$ or $3/4$, determined in Theorem \ref{2.9:thm}, \ref{4.9:thm}, or \ref{3.4:thm}.

We can now apply the hypergraph removal lemma of R\"odl and Schacht \cite{RS} and obtain a new sequence of $3$-graphs $\{G_n'\}_{n=1}^\infty$ satisfying $\I_6(G_n')\subseteq \I_6(C_n)$ by changing $o(n^3)$ edges. Thus, by Lemma \ref{ind:lem}, we know that $G_n'$ is isomorphic to $C(V_0,V_1,V_2)$ for some partition $[n]=V_0\cup V_1\cup V_2$. The result then follows by elementary calculus since $e(C(V_0,V_1,V_2))=e(G'_n)=e(C_n)+o(n^3)$ implies that the partition $V_0,V_1,V_2$ must be approximately that giving $C_n$ and hence by changing at most $o(n^3)$ edges in $G_n'$ we can obtain $C_n$.

This completes the proof in the easy case when our flag algebra proof tells us that $\mc{H}^\#_6(\F)\subseteq\I_6(C_n)$.  (This applies to the families \texttt{3-4-02, 5-9-05, 5-9-07, 5-9-08, 5-9-09, 5-9-10, 5-9-12, K4+H-01, K4+H+06}.)

There are two slightly more complicated cases:
\begin{itemize}
\item[(1)] Rather than determining $\pi(\F)$ directly we made use of blow-ups and Corollary \ref{sat:cor}. (This applies to the families \texttt{2-9, 4-9-01, 4-9-02, 5-9-01, 5-9-02, 5-9-03, 5-9-04, 5-9-06, 5-9-11}.)
\item[(2)] Our flag algebra proof does not immediately give $\mc{H}_6^\#(\F)\subseteq \I_6(C_n)$. (This applies to the families \texttt{3-4-01, 3-4-03, 5-9-01, 5-9-06, K4+H-02, K4+H+03, K4+H-04, K4+H+05, K4+H-07, K4+H+08, K4+H-09, K4+H+10}.)
\end{itemize}
We can deal with (1) as follows. If we used Corollary \ref{sat:cor} to determine $\pi(\F)$ then we have an auxillary family $\F'$ and a flag algebra proof determining $\pi(\F')$. Moreover for each $F' \in \F'$ there exists $F\in \F$ such that $F\leq F'$. Now suppose that $\{G_n\}_{n=1}^\infty$ is an almost extremal sequence for $\F$. We need to show that by changing at most $o(n^3)$ edges in $G_n$ we can obtain a sequence of $3$-graphs $\{G_n'\}_{n=1}^\infty$ that is almost extremal for $\F'$ (this will return us to the easy case of the proof). We can do this using the hypergraph removal lemma as long as we know that all 6 vertex 3-graphs $H$  that are not $\F'$-free satisfy $p(H;G_n)=o(1)$. This is straightforward to prove. Suppose there exists a 6 vertex 3-graph $H$ such that $p(H;G_n)\neq o(1)$ and $H$ is not $\F'$-free. Then there exist $F\in \F, F'\in\F', t\geq 1$ such that $F'$ is a subgraph of $H$ and $F$ is a subgraph of $F'(t)$. Moreover there exists $\epsilon>0$ and a subsequence $\{G_{n_k}\}_{k=1}^\infty$ such that $p(H;G_{n_k})\geq \epsilon$ for all $k$. Since $H$ contains $F'$, a standard ``supersaturation'' argument implies that $G_{n_k}$ contains arbitrarily large blow-ups of $F'$. Hence for $k$ large, $G_{n_k}$ contains $F'(t)$ and so is not $\F$-free, a contradiction.

To deal with complication (2) we have to show that the extra potentially $\F$-sharp 3-graphs given by our flag algebra proof are in fact $\F$-negligible. We omit the the details of this argument since it is tedious but not difficult. (In fact there are only two subcases to deal with: the spurious $\F$-sharp graphs are identical in the cases of \texttt{3-4-01} and  \texttt{3-4-03} and are also identical for the remaining families.)
\end{proof}

If $G$ is a 3-graph we say that $a,b\in V(G)$ are \emph{twins} if for all $x,y\in V\setminus\{a,b\}$ we have $axy\in G$ iff $bxy\in G$. 
\begin{proof}[Proof of Lemma \ref{ind:lem}]
In our arguments below we will be repeatedly examining small induced subgraphs of a given 3-graph $G$. If $a_1,\ldots,a_k\in V(G)$ then $G[a_1a_2\cdots a_k]$ is the subgraph induced by $\{a_1,\ldots,a_k\}$. Note that the vertices need not be distinct and so the induced subgraph may have less than $k$ vertices.

We first sketch the proof for $\mc{P}_S$. Let $G$ be a 3-graph of order $n\geq 6$ satisfying $\I_6(G)\subseteq \mc{P}_S$. Define a relation $\sim_S$ on $V=V(G)$ by $a\sim_S b$ iff $a=b$ or there exist distinct $c,d\in V\setminus\{a,b\}$ such that $G[abcd]=S(ab,c,d)=\{acd,bcd\}$, in which case we say $a\sim_S b$ \emph{via} $cd$. We claim that this defines an equivalence relation on $V$. We need to check transitivity. Suppose $a\sim_S b$ via $uv$ and $b\sim_S c$ via $xy$, then without loss of generality $G[abuvxy]=S(ab,ux,vy)$ so $G[acxy]=S(ac,x,y)$ and $a\sim_S c$ as required.

Next we claim that if $a\sim_S b$ then $a$ and $b$ are twins. Let $x,y\in V\setminus\{a,b\}$ and suppose $axy\in G$. If $a\sim_S b$ via $cd$, then wlog $G[abcdxy]=S(ab,cx,dy)$ so $bxy\in G$. Similarly if $bxy\in G$ then $axy\in G$, so $a$ and $b$ are twins.

If $G$ has no edges then $G= S([n],\emptyset,\emptyset)$ so suppose $xyz\in G$. It is easy to check that $x,y,z$ are all in different equivalence classes, say $V_x,V_y,V_z$.  Moreover if $v\in V\setminus\{x,y,z\}$ then examining $G[vxyz]$ we see that $v\in V_x\cup V_y\cup V_z$. Finally, using the fact that related vertices are twins, we obtain $G=S(V_x,V_y,V_z)$.

For $\mc{P}_J$ the proof is very similar, the main difference being that we define $a\sim_J b$ iff $a=b$ or there exist distinct $c,d\in V\setminus\{a,b\}$ such that $G[abcd]=J(ab,cd)=\{abc,abd\}$. Again $\sim_J$ is an equivalence relation: if $a\sim_J b$ via $uv$ and $b\sim_J c$ via $xy$ but $a\not\sim_J c$ then wlog $c\neq u$ so $G[abcuvx]=J(abx,uvc)$ and $G[abcuxy]=J(bcu,axy)$,  but $acu$ is a non-edge in the former and an edge in the latter, a contradiction. We claim that if $a\sim_J b$ then $a$ and $b$ are twins. Let $x,y\in V\setminus\{a,b\}$ and suppose $axy\in G$. If $a\sim_J b$ via $cd$ then by examining $G[abcdxy]$ we see that $bxy\in G$. Similarly if $bxy\in G$ then $axy\in G$, so $a$ and $b$ are twins.

Now either $G$ is empty, so $G=J(\emptyset,V)$, or $G$ contains an edge. Let $xyz\in G$ and suppose that no two distinct vertices are related. Let $v\in V\setminus\{x,y,z\}$, then wlog $G[xyzv]=J(yzv,x)$. Now if $a,b\in V\setminus\{x\}$ are distinct then $G[xyzvab]=J(yzvab,x)$ so $abx\in G$ and hence $G=J(V\setminus\{x\},x)$. 

Finally let $V_x$ be a largest equivalence class with $x,y\in V_x$, $x\neq y$. If $\Gamma_{xy}=\{z:xyz\in G\}$ then it is straightforward to check that $V(G)=V_x\cup \Gamma_{xy}$ is a partition of $V(G)$ into independent sets. Thus, since all vertices in $V_x$ are twins, we have $G= J(V_x,\Gamma_{xy})$.


For $\mc{P}_B$ we define $a\sim_B b$ iff $a=b$ or there exist distinct $c,d,e\in V\setminus\{a,b\}$ such that $G[abcde]=B(ab,cde)$ (so the only non-edge in $G[abcde]$ is $cde$), in which case we say $a\sim_B b$ \emph{via} $cde$. Again we claim $\sim_B$ is an equivalence relation. Suppose $a\sim_B b$ via $uvw$ and $b\sim_B c$ via $xyz$, but $a\not\sim_B c$. Without loss of generality we may suppose that $a\not\in\{x,y\}$, $c\not\in\{u,v\}$ and $\{x,y\}\neq\{u,v\}$. So $G[abcuvw]=B(ab,cuvw)$ and $G[abcxyz]=B(bc,xyza)$. This implies that $G[abuvwx]=B(ab,uvwx)$ and $G[abuvwy]=B(ab,uvwy)$. But then we have $G[acuvxy]=B(ac,uvxy)$ so $a \sim_B c$. As before related vertices are twins: suppose $a\sim_B b$ via $cde$. If $x,y\in V\setminus\{a,b\}$ and $axy\in G$ but $bxy\not\in G$ then $G[abcde]=B(ab,cde)$ and $G[abxy]=B(a,bxy)$. Now wlog $G[abcdxy]=B(ac,bdxy)$, so $G[bcdexy]=B(\emptyset,bcdexy)$ is empty, a contradiction, since $bcd\in G$. Hence $a$ and $b$ are twins.

Let $V_x$ be a largest equivalence class. If $|V_x|\geq 2$ then suppose $x,y\in V_x$ are distinct. It is easy to check that  $G=B(V_x,\Gamma_{xy})$. If no equivalence class contains more than one vertex then either $G=B(\emptyset,V)$ is empty or one can check that there is a vertex $x$ such that $G=B(\{x\},V\setminus\{x\})$.
\end{proof}
We note that with a little extra work one can prove that all the 3-graph properties listed in Lemma \ref{ind:lem} are in fact 5-induced. 
\subsection{Exactness}
Although we have stability for most of our results we have not proved any exact Tur\'an numbers for the families we consider. In some, if not all cases, it may be possible to deduce an exact Tur\'an number  result from the stability theorem (along the same lines as Theorem 1 \cite{Pexact}) however the obvious approach to this would require a separate argument for each $\F$.
\section{Questions}
There are a number of obvious questions that arise from our work. 

We were able to show a number of exact results for single $3$-graphs with Tur\'an density $2/9,4/9,5/9$ and $3/4$. Is there a systematic way to find or predict such results? 
\begin{question}
Apart from blow-ups, are there any other operations under which Tur\'an densities are invariant?\end{question}

We were able to find a number of new Tur\'an densities of finite families of $3$-graphs by taking a small $3$-graph $G$ and then investigating the Tur\'an problem given by forbidding a family $\mc{F}_G$ of ``small'' $3$-graphs not contained in any blow-up of $G$. In most cases we were able to show that $\pi(\F_G)=\lambda(G)$. Does this hold more generally?
\begin{question}
Is $\Lambda^{(r)}\subseteq \Pi_{\tr{fin}}^{(r)}$?\end{question}

Having given the first examples of finite families with irrational Tur\'an densities we suspect there exist single $r$-graphs with irrational Tur\'an densities (indeed the pentagon $C_5$ is quite possibly an example see \cite{MR}, \cite{R4}).
\begin{question} Do there exist single $r$-graphs with irrational Tur\'an densities?\end{question}

Another natural question is the following:
\begin{question} For $r\geq 3$, which (if any) of the following containments between sets of densities are strict?
\[
\Pi^{(r)}_1\subseteq \Pi^{(r)}_2\subseteq \cdots \subseteq \Pi^{(r)}_{\tr{fin}}\subseteq \Pi^{(r)}_{\infty}.\]
\end{question}

Although we did not really consider ``induced Tur\'an problems'' here, we could certainly ask analogous questions about the associated sets of densities for induced Tur\'an problems.
\section{Computational proofs with flag algebras}\label{comp:sec}
Our proofs make use of Razborov's flag algebra framework introduced in \cite{RF}. In particular we follow the method outlined by Razborov in \cite{R4}. For a precise description we refer back to Section 2 of our previous paper \cite{BT} where we provided a self-contained and detailed explanation of the method. 

There are, however, two important ways in which the computations used to prove the results in this paper differ from our earlier work. Firstly we make extensive use of supersaturation, via Corollary \ref{sat:cor}. For example when computing the Tur\'an density of $H=\{123, 124, 345, 156\}$ (in Theorem \ref{2.9:thm}) we use the fact that if
\[
H_1=\{123, 124, 134\},\quad H_2=\{123, 124,125,345\}, \quad H_3=\{123, 124,135,245\}\]
then $H\leq H_i$ for $i=1,2,3$.

Thus applying Corollary \ref{sat:cor} (ii) we have $\pi(H)=\pi(\{H, H_1, H_2, H_3\})$.  This makes our computation significantly easier: there are 192 non-isomorphic 6 vertex 3-graphs that are $H$-free but only 38 of these are also $\{H_1,H_2,H_3\}$-free. (A very rough proxy for the difficulty of the computation is the final size of the proof file. In this case the use of Corollary \ref{sat:cor} (ii) reduces our proof to less than 15\% of the size of the smallest proof we could otherwise find. Moreover the computation completes in less than 10\% of the time it would otherwise take.) 

The second difference between the computations used to prove the results in this paper and those in \cite{BT} is that in this case we are proving exact sharp Tur\'an density results. 
Razborov already achieved this for the induced $\{K_4^{(3)},E_1\}$ problem (see Theorem \ref{raz:thm}) however in that case good use was made of the extremal construction to guide the conversion from numerical to exact result. We have found that  even without using any information about the extremal construction we can often identify the sharp inequalities and zero eigenvalues, and hence make numerical results exact.  In fact we have found that even for problems with many non-isomorphic extremal constructions we can sometimes prove exact Tur\'an density results (see Baber \cite{RBcube} for an example of this in the hypercube). This is of particular interest since Tur\'an's $K_4^{(3)}$ problem is a famous example where it is conjectured that there are many distinct extremal constructions \cite{K}, \cite{FdF}. However we note that when proving the irrational Tur\'an densities in Theorem \ref{fin:thm} we made extensive use of the extremal constructions. For more discussion of the process used to produce our proofs from numerical results see Section 2.4 in \cite{Rahil}. Details of how to check and reprove our results are in the next subsection.

Note, that in order to achieve the required accuracy when converting floating point numerical results into exact proofs in $\mathbb{Q}$ (or indeed $\mathbb{Q}[\sqrt{5}]$, $\mathbb{Q}[\sqrt{13}]$ for the irrational Tur\'an densities) we make use of arbitrarily large integers. Indeed a glance at the proof file \texttt{Root5-prf.txt} reveals integers with over 150 digits.


\subsection{Source code}
Although all of our proof files are ``human readable'' their size precludes verification by hand. However (with the exception of the irrational results) they can all be verified using the program \texttt{DensityChecker.cpp} \cite{DC}. 
We also attach the source code used to generate the majority of our proofs: \texttt{ExactDensityBounder.cpp} \cite{EDB}. This provides a simple command line program to give upper bounds for Tur\'an densities of $3$-graphs. It requires a semi-definite program solver: either \texttt{csdp} \cite{csdp} or \texttt{sdpa} \cite{sdpa}, both of which are freely available.

Detailed installation and usage instructions can be found in the source code files. We emphasise that these programs are both very easy to install and use.

\section{Acknowledgements}
We thank Oleg Pikhurko for drawing our attention to Conjecture \ref{CG:conj} which he has also disproved by very different methods \cite{PP}.

 As we were preparing this paper we learnt of a similar project by Falgas--Ravry and Vaughan \cite{EV} to prove exact results for 3-graphs. Their approach is quite similar to ours although the actual results we have obtained are distinct. We have been able to independently verify all of the proofs of exact results announced in their paper and we attach these as proof files: \texttt{FRV-12-49-prf.txt}, \texttt{FRV-5-18-prf.txt}, \texttt{FRV-3-4-prf.txt} and \texttt{FRV-3-8-prf.txt}. (We note that the three results for which they obtain the exact Tur\'an density $3/8$, Theorems 6, 7 and 13 in \cite{EV}, are in fact all implied by the single result $\pi(F_{3,2},F_{1,5})=3/8$, thus we only provide a single proof file in this case.)

We would like to thank a referee for very helpful comments and questions.


\end{document}